\theoremstyle{plain}
\newtheorem{theorem}{Theorem}[section]
\newtheorem{lemma}[theorem]{Lemma}
\newtheorem{corollary}[theorem]{Corollary}
\newtheorem{proposition}[theorem]{Proposition}
\theoremstyle{definition}
\newtheorem{definition}[theorem]{Definition}
\newtheorem{problem}[theorem]{Problem}
\theoremstyle{remark}
\newcommand{\M}{\text{\calligra{Micro}}} 
\newcommand{\Mf}{\text{\calligra{Micro}}_{(f_n)_n}}
\newcommand{\Nano}{\text{\calligra{Nano}}} 
\newcommand{\Pico}{\text{\calligra{Pico}}}
\renewcommand{\subset}{\subseteq}
\renewcommand{\supset}{\supseteq}
\begin{document}

\title{Ideal-like properties of generalized microscopic sets}

\author[K. Czudek]{Klaudiusz Czudek}
\address{Institute of Mathematics, University of Gda\'{n}sk, ul.~Wita  Stwosza 57, 80-952 Gda\'{n}sk, Poland}
\email{klaudiusz.czudek@gmail.com}

\author[A. Kwela]{Adam Kwela}
\address{Institute of Mathematics, University of Gda\'{n}sk, ul.~Wita Stwosza 57, 80-952 Gda\'{n}sk, Poland}
\email{Adam.Kwela@ug.edu.pl}

\author[N. Mro\.{z}ek]{Nikodem Mro\.{z}ek}
\address{Institute of Mathematics, University of Gda\'{n}sk, ul.~Wita Stwosza 57, 80-952 Gda\'{n}sk, Poland}
\email{nmrozek@mat.ug.edu.pl}

\author[W. Wo{\l}oszyn]{Wojciech Wo{\l}oszyn}
\address{Institute of Mathematics, University of Gda\'{n}sk, ul.~Wita Stwosza 57, 80-952 Gda\'{n}sk, Poland}
\email{woloszyn.w.a@gmail.com}

\date{\today}

\thanks{The third author was supported by grant BW-538-5100-B860-15}

\subjclass[2010]{Primary:
28A05;  
secondary:
03E15, 
26A30. 
}
\keywords{microscopic set, Lebesgue null set,
  Cantor set, ideal}

\begin{abstract}
We show that not every family of generalized microscopic sets forms an ideal. Moreover, we prove that
some of these families have some weaker additivity properties and some of them do not have even that.
\end{abstract}

\maketitle

\section{Introduction}

By $\omega$ we denote the set of natural numbers, i.e., $\omega=\{0,1,\ldots\}$. For an interval $J\subset\mathbb{R}$ by $|J|$ we denote its length.

\begin{definition}
A set $M\subseteq\mathbb{R}$ is called \emph{microscopic} if for every $\varepsilon>0$ there is a sequence of intervals $(I_k)_k$ such that $M\subset\bigcup_k I_k$ and $|I_k|\leq \varepsilon^k$ for all $k\in\omega$. The family of all microscopic sets will be denoted by $\M$.
\end{definition}

The above notion was introduced in 2001 by J.~Appell (cf.~\cite{firstmicro}). In~\cite{mikro2} J.~Appell, E.~D'Aniello and M.~V\"ath studied connections between microscopic sets (as well as several other notions of small sets) and some kinds of continuity of real functions. Many properties of microscopic sets are similar to those of classical Lebesgue null sets. In particular, the family of microscopic sets form a $\sigma$-ideal that lies between $\sigma$-ideals of strong measure zero sets and null sets. More similarities were found in~\cite{duality}. There are also some differences. For instance, recently, one of the authors of this paper (A. Kwela) proved that additivity of $\M$ is $\omega_1$ (cf. \cite{add}). A good survey on microscopic sets can be found in ~\cite[Chapter 20]{monografia}.

In 2014 G. Horbaczewska introduced the following nice generalization of microscopic sets (cf.~\cite{mikrohorba}).
Let $(f_n)_n$ be a nonincreasing sequence of functions $f_n\colon (0,1)\to (0,1)$ such that:
\begin{itemize}
 \item $f_n$'s are increasing;
 \item $\lim_{x\to 0^{+}} f_n(x)=0$ for all $n$;
 \item there exists $x_0$ such that for every $x\in (0,x_0)$ the series $\sum_{n\in\omega} f_n(x)$ is convergent.
\end{itemize}
All sequences of functions considered in this paper are supposed to satisfy such conditions.

\begin{definition}
A set $M\subseteq\mathbb{R}$ is called \emph{$(f_n)_n$-microscopic} if for every $\varepsilon\in (0,1)$ there is a sequence of intervals $(I_k)_k$ such that $M\subset\bigcup_k I_k$ and $|I_k|\leq f_k(\varepsilon)$ for all $k\in\omega$. The family of all $(f_n)_n$-microscopic sets will be denoted by $\Mf$.
\end{definition}

In~\cite{mikrohorba} Horbaczewska studied some basic properties of such families of sets. In particular, she gave some conditions on the sequence $(f_n)_n$, under which the family $\Mf$ is equal to $\M$, and proved that this notion generates new families of sets. For many of such families she showed that they form $\sigma$-ideals, but for example for the sequence $(x^{2^n})_n$ her proof does not work. She asked whether such a family forms a $\sigma$-ideal. That was a starting point for our research. We investigate problems of the following sort: given a sequence $(f_n)_n$ and two sets, $A\in\Mf$ and $B$ small in some sense, is it true that $A\cup B\in\Mf$?

The paper is organized as follows. In Chapter \ref{general} we show some basic properties of generalized microscopic sets and prove that for any sequence $(f_n)_n$ the family of all sets that can be covered by an $\mathbf{F_\sigma}$ set from $\Mf$ forms a $\sigma$-ideal.

In Chapter~\ref{nano} we answer the question of Horbaczewska. For simplification, we call an $(x^{2^n})_n$-microscopic set \emph{nanoscopic} and denote the family of all nanoscopic sets by $\Nano$. We show that $\Nano$ does not even form an ideal. Of course, the family of nanoscopic sets is closed under taking subsets, but we will construct two nanoscopic sets union of which is not nanoscopic anymore. In the same chapter we show that some additivity properties remain true for $\Nano$. In particular, we show that union of a nanoscopic set and a strong measure zero set remains a nanoscopic set. Also, we give some conditions imposed on an $(f_n)_n$-microscopic set under which its union with any set of strong measure zero remains $(f_n)_n$-microscopic.

In the last chapter we show that some families of generalized microscopic sets are so far from being an ideal, that even adding a point to such a set can cause that it is not in this family anymore. In particular, we focus on a family of sets generated by the sequence $(x^{n!})_n$ and call its members \emph{picoscopic sets}. This family will be denoted by $\Pico$.


\section{General properties}
\label{general}


We start with some properties that are true for all families of generalized microscopic sets and do not depend on particular sequence of functions. 

The first fact has been already observed by Horbaczewska, but is unpublished, so we prove it here.

\begin{proposition}
For any sequence $(f_n)_n$ the family $\Mf$ is closed under taking subsets and $\mathbf{G_\delta}$-generated. 
\end{proposition}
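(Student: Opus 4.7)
\bigskip

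\noindent\textbf{Proof plan.} The first assertion is immediate from the definition: if $M \in \Mf$ and $N \subseteq M$, then every sequence of intervals covering $M$ also covers $N$, so the same witnesses work.

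For the second assertion, I would use the standard $\mathbf{G_\delta}$-envelope trick, adapted to the $(f_n)_n$-microscopic setting. Fix $M \in \Mf$ and a sequence $(\varepsilon_n)_n \subset (0,1)$ with $\varepsilon_n \to 0^+$ (for concreteness, $\varepsilon_n = 1/(n+2)$). For each $n$, I would invoke the definition to obtain a cover of $M$ by (open) intervals $(J_k^{(n)})_k$ with $|J_k^{(n)}| \le f_k(\varepsilon_n)$ for all $k$. Setting $U_n = \bigcup_k J_k^{(n)}$ gives an open set containing $M$, and $G := \bigcap_n U_n$ is then a $\mathbf{G_\delta}$ set with $M \subseteq G$.

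To verify $G \in \Mf$, let $\varepsilon \in (0,1)$ be given. Choose $n$ large enough that $\varepsilon_n \le \varepsilon$. Since each $f_k$ is increasing, $|J_k^{(n)}| \le f_k(\varepsilon_n) \le f_k(\varepsilon)$, and $G \subseteq U_n = \bigcup_k J_k^{(n)}$, so $(J_k^{(n)})_k$ witnesses $G \in \Mf$ for the given $\varepsilon$.

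The only subtlety is the passage from intervals (as in the definition) to \emph{open} intervals (needed so that $U_n$ is open and $G$ is genuinely $\mathbf{G_\delta}$). This is handled by first choosing the cover using a slightly smaller parameter $\varepsilon_n' < \varepsilon_n$ and then enlarging each interval into an open one of length still at most $f_k(\varepsilon_n)$; this is possible because $f_k$ is increasing, so $f_k(\varepsilon_n') < f_k(\varepsilon_n)$ leaves slack for the enlargement. This is the only technical point, and it is routine rather than a genuine obstacle.
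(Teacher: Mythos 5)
Your proof is correct and follows essentially the same route as the paper: take covers witnessing the definition for a sequence of parameters tending to $0$, intersect the corresponding open unions to get a $\mathbf{G_\delta}$ envelope $G$, and verify $G\in\Mf$ using monotonicity of the $f_k$'s. The only difference is that you justify the passage to open intervals explicitly (via a slightly smaller parameter), whereas the paper simply takes the covering intervals to be open from the start; this is a harmless refinement, not a different argument.
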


\begin{proof}
Fix a sequence $(f_n)_n$. The first part is obvious.

We will show that the family of all $\Mf$ sets is $\mathbf{G_\delta}$-generated. Indeed, if $A\in \Mf$, then for each $n\in\omega$ let $(I^n_k)_k$ be a sequence of open intervals such that $A\subset\bigcup_k I^n_k$ and $|I^n_k|\leq f_k(\frac{1}{n+1})$ for all $k\in\omega$. The set $G=\bigcap_n\bigcup_k I^n_k$ is a $\mathbf{G_\delta}$ set and $A\subset G$. We will show that $G\in\Mf$. If $\varepsilon>0$, then there is $n$ with $\varepsilon>\frac{1}{n+1}$. It suffices to observe that the sequence of intervals $(I^n_k)_k$ covers the set $G$ and $|I^n_k|\leq f_k(\frac{1}{n+1})\leq f_k(\varepsilon)$.
\end{proof}

Now we present a result which shows relationship between generalized microscopic sets and the family of Lebesgue null sets. 

\begin{proposition}
\label{null}
Every $(f_n)$-microscopic set is of Lebesgue measure zero. Moreover, if $X$ is a set of Lebesgue measure zero, then there exists a sequence $(g_n)_n$ such that $X$ is $(g_n)_n$-microscopic.
\end{proposition}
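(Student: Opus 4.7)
The plan splits along the two directions. The forward direction is a standard measure-theoretic estimate, while the backward direction requires constructing $(g_n)_n$ from a family of coverings of $X$ at successively finer scales.

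For the forward direction, the key step is to show that $S(\varepsilon):=\sum_{n\in\omega}f_n(\varepsilon)\to 0$ as $\varepsilon\to 0^+$. This follows by dominated convergence for series: each $f_n(\varepsilon)\to 0$ pointwise, and for $\varepsilon<x_0/2$ we have $f_n(\varepsilon)\leq f_n(x_0/2)$, with $\sum_n f_n(x_0/2)<\infty$ serving as the dominant. Given $\delta>0$, I would pick $\varepsilon$ with $S(\varepsilon)<\delta$; any cover of $M$ by intervals $(I_k)_k$ with $|I_k|\leq f_k(\varepsilon)$ then has total length at most $S(\varepsilon)<\delta$, so $M$ has outer Lebesgue measure zero.

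For the backward direction, for each $m\geq 2$ I would use Lebesgue-nullness to pick an open cover $X\subset\bigcup_k I^m_k$ with $\sum_k|I^m_k|<2^{-m}$, sorted so that $a^m_k:=|I^m_k|$ is nonincreasing in $k$. Since the coverings are chosen independently, $(a^m_k)_m$ need not be monotone in $m$, so I would replace it with
\[
b^m_k := \sup_{m'\geq m} a^{m'}_k,
\]
which is nonincreasing in both $m$ (sup over a smaller set) and $k$ (sup of nonincreasing sequences), is bounded by $2^{-m}$, and by Tonelli for nonnegative series satisfies $\sum_k b^m_k\leq\sum_{m'\geq m}\sum_k a^{m'}_k\leq 2^{-m+1}$. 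I would then define $g_n\colon(0,1)\to(0,1)$ to equal $b^m_n$ on each $[2^{-m},2^{-m+1})$ for $m\geq 2$, extend suitably on $[1/2,1)$, and add a tiny strictly-increasing summable perturbation such as $\varepsilon/2^{n+10}$ to enforce strict monotonicity and positivity without spoiling anything else. All four standing hypotheses on $(g_n)_n$ then follow from the listed properties of $b^m_n$, and for $\varepsilon\in[2^{-m},2^{-m+1})$ the cover $(I^m_k)_k$ witnesses that $X$ is $(g_n)_n$-microscopic, because $|I^m_k|=a^m_k\leq b^m_k\leq g_k(\varepsilon)$.

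The main obstacle is this last construction: the independently chosen coverings carry no built-in monotonicity in the scale parameter, so a naive choice $g_n(\varepsilon):=a^{m(\varepsilon)}_n$ would violate the monotonicity required of $(g_n)_n$. The supremum-from-above device is the key trick, enforcing monotonicity in $m$ for free while staying inside the summable budget $2^{-m+1}$ inherited from the original coverings.
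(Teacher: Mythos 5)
Your proposal is correct and follows essentially the same route as the paper: the forward direction via domination/uniform convergence giving $\sum_n f_n(\varepsilon)\to 0$, and the backward direction via covers at scales $2^{-m}$ with the key device $b^m_k=\sup_{m'\geq m}|I^{m'}_k|$, which is exactly the paper's $a_{m,n}=\max\{|I^j_n|:j\geq m\}$ used to restore monotonicity in the scale parameter. Your explicit step-function-plus-perturbation realization of $(g_n)_n$ is just a more detailed version of the paper's "take any sequence of functions interpolating these values."
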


\begin{proof}
At first, take any $(f_n)$-microscopic set $X$. Notice that we can assume that functions $f_n$ are defined on the interval $[0,1)$ and $f_n(0)=0$ for all $n\in\omega$. From the definition of $(f_n)$-microscopic set, there is $x_0\in (0,1)$ such that the series $\sum_{n\in\omega}f_n(x_0)$ converges and $f_n(x)\leq f_n(x_0)$ for all $n\in\omega$ and $x\in [0,x_0]$. Therefore, $\sum_{n\in\omega}f_n$ converges uniformly on $[0,x_0]$. We conclude that $\sum_{n\in\omega}f_n$ is continuous in 0 and, finally, that $X$ must be of Lebesgue measure zero.

To prove the second part, let $(I_n^m)_n$, for $m\in\omega$, be a sequence of nonempty open intervals covering $X$ such that $\sum_{n\in\omega}|I_n^m|<\frac{1}{2^{m+1}}$ and $|I_{n+1}^m|\leq|I^m_n|$ for all $n, m\in\omega$. Define $a_{m, n}=\max\{|I^j_n| : j\geq m\}$ for each $n,m\in\omega$. It is easy to see that $a_{m, n}$'s have the following properties:
\begin{itemize}
	\item $\sum_{j\in\omega} a_{m,j}<\infty$ for each $m\in\omega$;
	\item $\lim_{j\to\infty}a_{j,n}=0$ for each $n\in\omega$;
	\item $a_{m+1, n}\leq a_{m, n}$ for all $n, m\in\omega$;
	\item $|I_n^m|\leq a_{m, n}$ for all $n, m\in\omega$.
\end{itemize}

Take any $n, m\in\omega$. Let $k\geq m$ be the maximal natural number such that $a_{m, n+1}=a_{k, n+1}$. Then we have:
$$a_{m, n+1}=a_{k, n+1}=|I_{n+1}^k|\leq|I_n^k|\leq \max\{|I_n^j| : j\geq k\}\leq \max\{|I_n^j| : j\geq m\}=a_{m, n}.$$
Now let $(g_n)_n$ be any sequence of functions satisfying conditions from the definition of $(f_n)$-microscopic sets and such that $g_n(\frac{1}{2^{m+2}})=a_{m, n}$ for $m, n\in\omega$. Properties mentioned above guarantee that this construction is possible and $X$ is $(g_n)_n$-microscopic.
\end{proof}

If $\mathcal{A}$ is a family of subsets of reals, then by $\mathcal{A}^{\star}$ we mean a family of all sets that can be covered by an $\mathbf{F_\sigma}$ set from $\mathcal{A}$.
It is known that if $\mathcal{I}$ is a $\mathbf{G_\delta}$ generated $\sigma$-ideal that contains all singletons, then 
$\mathcal{I}^{\star}$ forms a $\sigma$-ideal as well (see~\cite{Fsigma-ideal}).
Our next goal is to show that for any sequence $(f_n)_{n}$ the family $\Mf^{\star}$ is a $\sigma$-ideal. In the case of microscopic sets, it is obvious since they form a $\sigma$-ideal.
In the case of nanoscopic sets, it was already observed by G. Horbaczewska in~\cite{mikrohorba}. However, her proof does not work in the general case, so we present here an essentially new one.
We start with an observation that even if both $\Mf$ and $\Mf^{\star}$ are $\sigma$-ideals, they always differ.

\begin{proposition}
For any sequence $(f_n)_n$ there exists a set $X$ that belongs to $\Mf$ and does not belong to $\Mf^{\star}$.
\end{proposition}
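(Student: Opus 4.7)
The plan is to exhibit a dense $G_\delta$ set $X\subset\R$ that lies in $\Mf$ yet cannot be enclosed in any $\mathbf{F_\sigma}$ member of $\Mf$. The underlying mechanism is a measure--category mismatch: by Proposition~\ref{null} every set in $\Mf$ has Lebesgue measure zero, so a closed set in $\Mf$ is closed and null, hence nowhere dense; consequently every $\mathbf{F_\sigma}$ set in $\Mf$ is meager. Since a meager set in $\R$ cannot contain a comeager set, any comeager $X\in\Mf$ will witness $\Mf\neq\Mf^{\star}$.

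To construct $X$, enumerate $\Q=\{q_n:n\in\omega\}$ and, for each $m\in\omega$, define
$$U_m=\bigcup_{n\in\omega}\left(q_n-\tfrac{1}{2}f_n\!\left(\tfrac{1}{m+2}\right),\,q_n+\tfrac{1}{2}f_n\!\left(\tfrac{1}{m+2}\right)\right).$$
Each $U_m$ is open and contains $\Q$, hence open dense in $\R$; by the Baire category theorem the set $X:=\bigcap_{m\in\omega}U_m$ is a comeager $G_\delta$. To verify $X\in\Mf$, fix $\varepsilon\in(0,1)$, pick $m\in\omega$ with $\tfrac{1}{m+2}<\varepsilon$, and observe that the intervals defining $U_m$ cover $X$ while the $n$-th such interval has length $f_n\!\left(\tfrac{1}{m+2}\right)\leq f_n(\varepsilon)$ by monotonicity of $f_n$.

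For the second half, suppose for contradiction that $X\subset F$ where $F=\bigcup_n F_n$ is an $\mathbf{F_\sigma}$ set in $\Mf$. Since $\Mf$ is closed under taking subsets, each closed $F_n$ belongs to $\Mf$ and is therefore null and nowhere dense; hence $F$ is meager, contradicting the comeagerness of $X\subset F$. No serious obstacle is anticipated: the only subtle point is aligning indices so that the $n$-th covering interval of $X$ has length bounded by $f_n(\varepsilon)$, which is handled by fixing an enumeration of $\Q$ and assigning the $n$-th interval to the $n$-th rational.
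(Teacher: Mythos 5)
Your proposal is correct and follows essentially the same route as the paper: the same dense $G_\delta$ set built from intervals of lengths $f_n(\cdot)$ around an enumeration of the rationals, combined with the observation that closed sets in $\Mf$ are null and hence nowhere dense, so any $\mathbf{F_\sigma}$ member of $\Mf$ is meager and cannot contain the residual set $X$. The only differences (working in $\R$ rather than $[0,1]$, and the choice $\tfrac{1}{m+2}$ instead of $\tfrac{1}{n+1}$) are immaterial.
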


Our proof is rather standard.

\begin{proof}
Fix a sequence $(f_n)_n$ and an enumeration $\mathbb{Q}\cap [0,1]=\{q_i:\ i\in\omega\}$. Let $X=\bigcap_n \bigcup P_n$, where
$$P_n=\left\{\left(q_i-\frac{1}{2}f_{i}\left(\frac{1}{n+1}\right),q_i+\frac{1}{2}f_{i}\left(\frac{1}{n+1}\right)\right) :\ i\in\omega\right\}.$$
It is easy to see that the set $X$ is $(f_n)_n$-microscopic.

On the other hand, $X$ is cannot be included in any $(f_n)_n$-microscopic $\mathbf{F_\sigma}$ set. Indeed, assume otherwise and let $X\subset\bigcup_k F_k$, where each $F_k$ is closed and $\bigcup_k F_k$ is $(f_n)_n$-microscopic. Observe that each $F_k$ is nowhere dense (otherwise it would contain an open interval of positive Lebesgue measure and each $F_k$ is of measure zero as an $(f_n)_n$-microscopic set; cf. Proposition \ref{null}). Hence, $\bigcup_k F_k$ is of first category. However, $X$ is a $\mathbf{G_\delta}$ set which is dense in $[0,1]$, so it is residual in $[0,1]$. Therefore, $[0,1]\setminus X$ is of first category. We get that $[0,1]$ is a union of two sets of first category. This contradicts the Baire Theorem.
\end{proof}

Now we proceed to showing that the family $\Mf^{\star}$ always forms a $\sigma$-ideal. We will need the following technical lemma.

\begin{lemma}
\label{zwarte}
Let $X$ be a compact $(f_n)$-microscopic set. Then for every $k\in\omega$ and $\varepsilon\in (0,1)$ the set $X$ can be covered by some finite subsequence of a sequence of intervals of lengths $(f_n(\varepsilon))_{n> k}$.
\end{lemma}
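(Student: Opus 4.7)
My plan is to apply the $(f_n)_n$-microscopic property of $X$ with a carefully chosen parameter $\varepsilon' \in (0,\varepsilon)$, then extract a finite subcover using the compactness of $X$, and finally reindex the chosen intervals by shifting their indices up by $k+1$.

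The first step is to pick $\varepsilon' \in (0,\varepsilon)$ small enough that the inequality $f_m(\varepsilon') \leq f_{m+k+1}(\varepsilon)$ holds for every $m \in \omega$ simultaneously. The pointwise vanishing $f_m(x) \to 0$ as $x \to 0^+$ takes care of each fixed $m$, and the regularity of the sequence $(f_n)_n$ is what should yield a uniform choice (for instance, in the nanoscopic case $f_n(x)=x^{2^n}$ the explicit value $\varepsilon' = \varepsilon^{2^{k+1}}$ works, since $(\varepsilon^{2^{k+1}})^{2^m}=\varepsilon^{2^{m+k+1}}$). Having fixed $\varepsilon'$, the $(f_n)_n$-microscopic property of $X$ provides a sequence of open intervals $(J_m)_{m \in \omega}$ with $X \subset \bigcup_m J_m$ and $|J_m| \leq f_m(\varepsilon')$, and compactness of $X$ then yields a finite subcover $J_{m_1},\ldots,J_{m_t}$ with $m_1 < m_2 < \cdots < m_t$.

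The reindexing step is essentially automatic: set $n_i := m_i + k + 1$ for $i=1,\ldots,t$. These are $t$ strictly increasing integers, all greater than $k$, and the choice of $\varepsilon'$ guarantees
\[
|J_{m_i}| \leq f_{m_i}(\varepsilon') \leq f_{m_i + k + 1}(\varepsilon) = f_{n_i}(\varepsilon).
\]
Defining $I_{n_i} := J_{m_i}$ and filling the remaining positions of the sequence $(I_n)_{n > k}$ by arbitrary intervals satisfying $|I_n| \leq f_n(\varepsilon)$ produces a sequence of intervals of lengths $(f_n(\varepsilon))_{n>k}$ whose finite subsequence $(I_{n_i})_{i=1}^{t}$ covers $X$, which is exactly what is required.

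The main obstacle is to secure a uniform $\varepsilon'$ verifying $f_m(\varepsilon') \leq f_{m+k+1}(\varepsilon)$ for all $m$: since $f_{m+k+1}(\varepsilon) \to 0$ as $m \to \infty$, a priori the infimum $\inf_{m\in\omega} f_m^{-1}(f_{m+k+1}(\varepsilon))$ could collapse to $0$ for an exotic admissible sequence. For the sequences actually targeted by the lemma this does not happen, but in full generality one can couple the shift with an iterative refinement---making $\varepsilon'$ progressively smaller and re-extracting the finite subcover---exploiting the fact that at each stage only the finitely many indices appearing in the subcover actually need to satisfy the inequality, and using compactness to keep the subcover finite at every stage.
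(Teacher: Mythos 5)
Your main argument hinges on finding a single $\varepsilon'$ with $f_m(\varepsilon')\leq f_{m+k+1}(\varepsilon)$ for \emph{all} $m$, and this is where the proof breaks down: the lemma is stated (and used, via Lemma \ref{lem1} and Theorem \ref{Fsigma}) for arbitrary admissible sequences, and the required uniform inequality fails not only for ``exotic'' sequences but already for the picoscopic sequence $f_n(x)=x^{(n+1)!}$ central to this paper. There one would need $\varepsilon'\leq\varepsilon^{(m+k+2)!/(m+1)!}$ for all $m$, and the exponent tends to infinity, so no positive $\varepsilon'$ works; in fact no reindexing fixed in advance can help, since any injection $\sigma\colon\omega\to\{k+1,k+2,\ldots\}$ must satisfy $\sigma(m)>m$ infinitely often, which again forces $\inf_m \varepsilon^{(\sigma(m)+1)!/(m+1)!}=0$. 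Your fallback --- iteratively shrinking $\varepsilon'$ so that ``only the finitely many indices appearing in the subcover'' satisfy the inequality --- is circular rather than a proof: which indices appear in the finite subcover depends on $\varepsilon'$, and as $\varepsilon'$ shrinks the number of intervals needed in the subcover can grow without bound, so you never argue that the process terminates with a single $\varepsilon'$ whose subcover indices all satisfy the needed inequalities.

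The paper resolves exactly this circularity with a two-scale construction that your sketch does not contain. One first takes a coarse finite cover $(I'_n)_{n\leq l'}$ at a level $\varepsilon'$ chosen so that $f_0(\varepsilon')<f_{2k}(\varepsilon)$, and then a fine finite cover $(I''_n)_{n\leq l''}$ at a level $\varepsilon''$ chosen (after $l'$ is known) so that $f_0(\varepsilon'')<f_{k+l'}(\varepsilon)$ and each $I''_i$ lies inside some $I'_j$. The problematic low-index intervals of the fine cover then sit inside at most $k$ coarse intervals, each of length $<f_{2k}(\varepsilon)$, so they can be absorbed by replacement intervals assigned the indices $k+1,\ldots,2k$ --- indices reserved \emph{before} $l'$ or $l''$ is known --- while the remaining fine intervals get indices $2k+1,\ldots,l''$ using $f_0(\varepsilon'')<f_{k+l'}(\varepsilon)$ and $|I''_n|<f_n(\varepsilon'')\leq f_n(\varepsilon)$. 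Your explicit computation does show the lemma for the nanoscopic sequence $x^{2^n}$, but as written the proposal does not prove the general statement.
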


\begin{proof}
Choose $\varepsilon>0$ and $k\in\omega$. We will show that $X$ can be covered by open intervals $(I_n)_{k<n}$ such that $|I_n|<f_n(\varepsilon)$. By compactness of $X$, it will end the proof. Firstly, for some technical reasons we need to introduce two covers of $X$.
 
Let $\varepsilon'$ be such that $f_0(\varepsilon')<f_{2k}(\varepsilon)$. Since $X$ is a compact $(f_n)$-microscopic set, we can find $l'\in\omega$ with $l'>k$ and open intervals $(I'_n)_{n\leq l'}$ such that $|I'_n|<f_n(\varepsilon')$, for each $n\leq l'$, that cover the set $X$.
 
Let $\varepsilon''$ be such that $f_0(\varepsilon'')<f_{k+l'}(\varepsilon)$. Now we can find $l''\in\omega$ and an open cover $(I''_n)_{n\leq l''}$ of $X$ such that $|I''_n|<f_n(\varepsilon'')$ for all $n\leq l''$. 
Without loss of generality, we can assume that $l''>k+l'$ and that every interval $I''_i$ is included in some $I'_j$ (to ensure the second part it suffices to take $\varepsilon''$ such that $f_0(\varepsilon'')$ is smaller than gaps between intervals $I'_j$, for $j\leq l'$).

By the pigeonhole principle, we can find numbers $n_0,\ldots,n_{k-1}\in \{0,\ldots,l'\}$
and $m_0\ldots,m_{2k-1}\in\{0,\ldots,k+l'\}$ such that each interval $I''_{m_i}$ is included in some $I'_{n_j}$.

We are ready to build a cover of $X$ by sets $(I_n)_{n>k}$ such that $|I_n|<f_n(\varepsilon)$ for all $n$.
Let $I_{k+1},\ldots,I_{2k}$ be intervals such that they have the required length and cover intervals $I'_{n_0},\ldots,I'_{n_{k-1}}$ (recall that $f_0(\varepsilon')<f_{2k}(\varepsilon)$).
Hence, they cover also intervals $I''_{m_0},\ldots,I''_{m_{2k-1}}$. Now we can find intervals $I_{2k+1},\ldots,I_{k+l'}$ of the required lengths that cover all the intervals from $(I''_{n})_{n<k+l'}$ which are not covered by $I_{k+1},\ldots,I_{2k}$ (note that there are at most $k+l'-2k$ such $I''_{n}$ and recall that $f_0(\varepsilon'')<f_{k+l'}(\varepsilon)$). Finally, let $I_{n}=I''_{n}$ for all $n\in\{k+l',\ldots,l''\}$. Hence, we have built the desired cover of the set $X$. 
\end{proof}

\begin{theorem}
\label{Fsigma}
For any sequence $(f_n)_n$ the family $\Mf^{\star}$ forms a $\sigma$-ideal.
\end{theorem}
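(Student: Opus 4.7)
The family $\Mf^\star$ is closed under subsets directly from its definition, so only countable additivity requires work. Given $(A_n)_{n\in\omega}$ in $\Mf^\star$, I would fix $\mathbf{F_\sigma}$ witnesses $F_n\in\Mf$ with $A_n\subseteq F_n$. The obvious cover of the union is $\bigcup_n F_n$, which is automatically $\mathbf{F_\sigma}$, so the entire task reduces to proving $\bigcup_n F_n\in\Mf$. Writing each closed piece of each $F_n$ as a countable union of its intersections with $[-m,m]$, and using that $\Mf$ is closed under subsets, this further reduces to the following statement: for any countable family $(K_m)_{m\in\omega}$ of compact sets in $\Mf$, the union $\bigcup_m K_m$ belongs to $\Mf$.

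The plan for this reduction is to use Lemma \ref{zwarte} to process the compacta one at a time and slot their covers into consecutive blocks of indices of a single master sequence. Fix $\varepsilon\in (0,1)$. I would inductively pick integers $0=j_0<j_1<\ldots$ as follows: having chosen $j_m$, apply Lemma \ref{zwarte} to $K_m$ with parameters $k=j_m$ and $\varepsilon$ to get finitely many intervals $J_1,\ldots,J_{N_m}$ covering $K_m$ whose lengths satisfy $|J_i|\leq f_{n_i}(\varepsilon)$ for some indices $j_m<n_1<n_2<\ldots<n_{N_m}$; then set $j_{m+1}:=j_m+N_m$ and place $J_i$ at position $j_m+i$ of the output sequence $(I_j)_{j\in\omega}$. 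The remaining initial slot (index $0$, before the first block) is filled by any degenerate interval.

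The key point is that this reassignment respects the length bound $|I_j|\leq f_j(\varepsilon)$. Indeed, since $n_1<n_2<\ldots<n_{N_m}$ are distinct integers strictly greater than $j_m$, one has $n_i\geq j_m+i$, and monotonicity of $(f_n)_n$ in $n$ then yields $|J_i|\leq f_{n_i}(\varepsilon)\leq f_{j_m+i}(\varepsilon)$. Thus the output sequence witnesses $\bigcup_m K_m\in\Mf$, and we are done.

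The principal conceptual step is the reduction to countably many compact pieces, which is what makes Lemma \ref{zwarte} applicable. The only real bookkeeping hazard is coordinating the indices $n_i$ produced by the lemma with the target positions $j_m+i$ in the master sequence; this is handled exactly by the monotonicity of $(f_n)_n$ in $n$, so as long as we insist $j_m$ is always chosen large enough at stage $m$ (which Lemma \ref{zwarte} permits), no collisions or length violations occur.
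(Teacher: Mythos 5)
Your proof is correct and follows essentially the same route as the paper: reduce to a countable family of compact $(f_n)_n$-microscopic sets obtained from the $\mathbf{F_\sigma}$ witnesses, then apply Lemma~\ref{zwarte} repeatedly with increasing starting index $k$ to interleave the resulting finite covers into one sequence witnessing that the covering $\mathbf{F_\sigma}$ set lies in $\Mf$. Your repositioning of intervals to consecutive slots via the monotonicity of $(f_n)_n$ is only a harmless bookkeeping variant of the paper's argument.
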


\begin{proof}
Fix a sequence $(f_n)_n$. It is obvious that $\Mf^{\star}$ is closed under taking subsets. 

We have to show that $\Mf^{\star}$ is closed under countable unions. Let $A_i\in\Mf^{\star}$ for all $i\in\omega$ and choose $\varepsilon\in (0,1)$. For each $A_i$ we can find compact $(f_n)_n$-microscopic sets $B^i_k$ such that
$A_i\subset\bigcup_k B^i_k$. Let $(C_k)_{k\in\omega}=(B_k^i)_{k,i\in\omega}$ be a reenumeration.
By Lemma~\ref{zwarte}, we can find intervals $I_n$ such that $|I_n|<f_n(\varepsilon)$, for each $n$, and 
$$\bigcup A_n\subset\bigcup C_n\subset\bigcup I_n.$$
This ends the proof.
\end{proof}


\section{Nanoscopic sets}
\label{nano}


\begin{definition}
A set $M$ is called \emph{nanoscopic} if for any $\varepsilon>0$ there exists a sequence of intervals $(I_k)_k$ such that $M\subset\bigcup_k I_k$
and $|I_k|\leq \varepsilon^{2^k}$ for all $k\in\omega$. In this case we write $M\in\Nano$.
\end{definition}

A set $M$ is of strong measure zero if for each sequence of positive reals $(\varepsilon_k)_k$ there is a sequence of intervals $(I_k)_k$ such that $M\subset\bigcup_k I_k$ and $|I_{k}|\leq \varepsilon_k$ for all $k\in\omega$. Observe that all countable sets are of strong measure zero.

\begin{theorem}
\label{smz-nano}
If $A$ is nanoscopic and $B$ is of strong measure zero, then $A\cup B$ is nanoscopic.
\end{theorem}

\begin{proof}
For each $n\in\omega$ let $(I^n_k)_k$ be a sequence of intervals such that $A\subset\bigcup_k I^n_k$ and $|I^n_k|\leq (\frac{1}{2^{2^n}})^{2^k}=\frac{1}{2^{2^{n+k}}}$ for all $k\in\omega$. Note that $G=\bigcap_n\bigcup_k I^n_k$ is nanoscopic and $A\subset G$. Choose $\varepsilon>0$. There are two possible cases:

\textit{Case 1.} There is $n$ such that for all $k$ and $m>n$ only finitely many of the intervals $I^m_j$, for $j\in\omega$, are contained in the interval $I^n_k$. Then for each $k$ the set
$$A_k=I^n_k\cap G$$
is compact and nanoscopic (as a subset of $G$). Moreover, $A\subset\bigcup_k A_k$. 
By Lemma~\ref{zwarte} applied to $A_n$'s, we can find a sequence of naturals $(s_k)_k$ and intervals $I_n$, for all $n\in\omega\setminus\{s_0,s_1,\ldots\}$, such that:
\begin{itemize}
	\item $|I_k|<\varepsilon^{2^k}$ for each $k$;
	\item $A_n\subset \bigcup_{k\in\{s_n+1,\ldots,s_{n+1}-1\}} I_k$ for each $n$.
\end{itemize}
Now, by the definition of a strong measure zero set, we can find intervals $I_{s_n}$, for $n\in\omega$, that cover the set $B$ and satisfy $|I_{s_n}|<\varepsilon^{2^{s_n}}$. Hence, $A\cup B$ is nanoscopic.

\textit{Case 2.} For all $n$ there are $k$ and $m>n$ such that infinitely many of the intervals $I^m_j$, for $j\in\omega$, are contained in the interval $I^n_k$.

There is $n$ such that $\varepsilon>\frac{1}{2^{2^n}}$. Then, there are $k\in\omega$, $m>n$ and an infinite set $T=\{t_0,t_1,\ldots\}\subset\omega$ such that $I^n_k\supset I^m_j$ for each $j\in T$. Observe that $A\subset I^n_k\cup\bigcup_{j\in\omega\setminus T} I^m_j$. Let $\varepsilon_i=\frac{1}{2^{2^{m+t_i}}}$. 

Since $B$ is of strong measure zero, there is a sequence of intervals $(J_{i})_i$ such that $B\subset\bigcup_i J_{i}$ and $|J_i|\leq \varepsilon_i$ for all $i\in\omega$. 

Let $K_0=I^n_k$ and $K_{j+1}=I^m_j$ for $j\notin T$ and $K_{t_i+1}=J_{i}$ for $i\in\omega$. Then, the sequence of intervals $(K_i)_i$ is such that $A\cup B\subset\bigcup_i K_i$ and $|K_i|\leq \varepsilon^{2^i}$ for all $i\in\omega$ (note that $m\geq n+1$). Hence, $A\cup B$ is nanoscopic.
\end{proof}

Notice that in the first case of the above proof we do not use any specific properties of nanoscopic sets. Hence, we get the following general corollary.

\begin{corollary}
\label{cor}
If $A\in\Mf^{\star}$ and $B$ is of strong measure zero, then $A\cup B$ is $(f_n)_n$-microscopic.
\end{corollary}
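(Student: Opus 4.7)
The plan is to isolate what Case~1 of the proof of Theorem~\ref{smz-nano} actually uses: it needs only that $A$ is covered by countably many compact $(f_n)_n$-microscopic sets, together with Lemma~\ref{zwarte} and the definition of strong measure zero. Once $A\in\Mf^{\star}$ is reduced to this shape, the same interleaving argument delivers the conclusion in the general $(f_n)_n$ setting.

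First I would unpack the hypothesis. Since $A\in\Mf^{\star}$, there are closed sets $F_n$ with $A\subset\bigcup_n F_n\in\Mf$, and each $F_n$ lies in $\Mf$ by closure under subsets. Writing $F_n=\bigcup_j (F_n\cap[-j,j])$ and reindexing, I obtain a countable family of compact $(f_n)_n$-microscopic sets $(A_n)_n$ whose union contains $A$. This is the only preliminary step that is specific to the $\mathbf{F_\sigma}$-hypothesis.

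Fix $\varepsilon\in(0,1)$. The core step is an inductive construction of an increasing sequence $s_0<s_1<\ldots$ in $\omega$ together with intervals $I_k$ for $k\notin\{s_n:n\in\omega\}$ satisfying $|I_k|\leq f_k(\varepsilon)$ and $A_n\subset\bigcup_{s_n<k<s_{n+1}} I_k$. Given $s_n$, Lemma~\ref{zwarte} applied to $A_n$ with parameter $s_n$ supplies a finite cover of $A_n$ by intervals whose lengths form a subsequence of $(f_j(\varepsilon))_{j>s_n}$; I place each such interval at its prescribed index, take $s_{n+1}$ to be one above the largest index used, and fill any remaining slot in $(s_n,s_{n+1})$ with a one-point interval. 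Next, I invoke the strong measure zero property of $B$ for the gauge $(f_{s_n}(\varepsilon))_n$ to obtain intervals $I_{s_n}$ with $|I_{s_n}|\leq f_{s_n}(\varepsilon)$ covering $B$. The combined sequence $(I_k)_k$ then witnesses $A\cup B\in\Mf$.

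There is no genuine obstacle beyond locating the right abstract hypothesis: all the analytic content is already packaged in Lemma~\ref{zwarte}, and the decomposition of closed subsets of $\R$ into countably many compacts is automatic. The one small bookkeeping point is to make sure that the indices reserved for $B$, namely $\{s_n\}$, do not collide with those chosen by Lemma~\ref{zwarte} at each stage; this is guaranteed precisely because Lemma~\ref{zwarte} lets us push the cover of $A_n$ into indices strictly greater than any prescribed $k=s_n$.
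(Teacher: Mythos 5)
Your proof is correct and follows essentially the same route as the paper: the paper derives the corollary by observing that Case~1 of the proof of Theorem~\ref{smz-nano} only uses that $A$ is covered by countably many compact $(f_n)_n$-microscopic sets, and then runs exactly your block-indexed application of Lemma~\ref{zwarte} with the reserved indices $s_n$ handled by strong measure zero. Your explicit decomposition of the $\mathbf{F_\sigma}$ cover into compacts via intersection with $[-j,j]$ is the same device the paper uses in the proof of Theorem~\ref{Fsigma}.
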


In the next theorem we consider some other cases in which union of an $(f_n)_n$-microscopic set and a set of strong measure zero is $(f_n)_n$-microscopic.

\begin{theorem}
\label{suma}
Let $X$ be an $(f_n)$-microscopic set satisfying at least one of the following conditions:
\begin{itemize}
	\item[(a)] $X$ is in $\Mf^{\star}$;
	\item[(b)] $\overline{X}$ is an unbounded interval;
	\item[(c)] $X$ is bounded.
\end{itemize}
Then for any set $Y$ of strong measure zero the union $X\cup Y$ is $(f_n)$-microscopic.
\end{theorem}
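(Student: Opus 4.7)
My plan is to treat the three cases separately. Case~(a) is immediate from Corollary~\ref{cor}, so the real work lies in (b) and (c).

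For case~(c), I would start with $X\subset[-N,N]$ and, for each $m\in\omega$, fix an open cover $(I^m_k)_k$ of $X$ with $|I^m_k|\leq f_k(1/(m+1))$. Using boundedness I would arrange that every $I^m_k\subset(-N-1,N+1)$, giving a bounded $\mathbf{G_\delta}$-hull $G=\bigcap_m\bigcup_k I^m_k\supset X$ in $\Mf$. Then I would adapt the Case~1/Case~2 bifurcation from the proof of Theorem~\ref{smz-nano}: either there exists $n$ such that for every $k$ and every $m>n$ only finitely many $I^m_j$ lie inside $I^n_k$ (``Case~1''), or for every $n$ there are $k$ and $m>n$ with infinitely many $I^m_j\subset I^n_k$ (``Case~2''). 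Case~1 makes the pieces $I^n_k\cap G$ behave like compact $(f_n)_n$-microscopic sets, placing $X$ in $\Mf^\star$, so that Corollary~\ref{cor} finishes the argument.

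In Case~2, I would construct the required cover of $X\cup Y$ by hand, following the pattern from the proof of Theorem~\ref{smz-nano}: use a single clustering interval $I^n_k$ as the zeroth member of the new cover, fill most subsequent slots with the non-clustered intervals of a finer cover $(I^m_j)_{j\notin T}$, and insert intervals from a strong measure zero cover of $Y$ into the positions released by the clustered intervals indexed by $T$. For case~(b), the hypothesis that $\overline{X}$ is an unbounded interval plays the structural role that boundedness plays in (c): density of $X$ in $\overline{X}$ lets us control how the covers $(I^m_k)_k$ spread, and the same bifurcation and construction can be carried out.

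The main obstacle will be the length bookkeeping in Case~2. The argument in Theorem~\ref{smz-nano} exploits the fact that, for $f_n(x)=x^{2^n}$, the quantity $f_j^{-1}(f_{j+1}(\varepsilon))=\varepsilon^2$ is independent of $j$, so a single choice of $m$ validates $f_j(1/(m+1))\leq f_{j+1}(\varepsilon)$ uniformly in $j$. For a general sequence $(f_n)_n$ this ratio depends on $j$, so I expect to choose the parameters $n$, $k$, $m$ and the infinite index set $T$ adaptively in $\varepsilon$, using only the monotonicity and summability of $(f_n)_n$ together with the structural hypothesis (boundedness in (c), or density in the unbounded interval closure in (b)) to secure the required clustering and to meet all the bounds $|K_i|\leq f_i(\varepsilon)$.
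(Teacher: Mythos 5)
You have correctly identified where the difficulty lies, but you have not resolved it, and the resolution is precisely what the paper's proof consists of. Your Case~2 plan is modelled on the shift used in Theorem~\ref{smz-nano}: put the clustering interval at position $0$ and move the finer cover $(I^m_j)_{j\notin T}$ into the ``subsequent slots''. That shift needs $f_j(\delta)\leq f_{j+1}(\varepsilon)$ \emph{uniformly in $j$} for some admissible $\delta$, and for a general sequence $(f_n)_n$ no choice of $\delta$ (however adaptive) achieves this: one may have $\inf_j f_j^{-1}(f_{j+1}(\varepsilon))=0$ (this is exactly the phenomenon exploited later to show that $\Nano$ and $\Pico$ are not ideals). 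If instead you keep the fine cover at its original positions, then the clustering interval has no slot, and displacing the occupant of whichever slot you give it recreates the same problem. Saying you will ``choose the parameters adaptively'' does not address this circularity; as written, the Case~2 construction does not go through for general $(f_n)_n$, which is the whole content of parts (b) and (c).

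The paper's proof avoids shifting altogether via a slot-freeing scheme: it suffices to produce a cover $(J'_n)_n$ of $X$ with $|J'_n|\leq f_n(\varepsilon)$ such that $X\subseteq\bigcup_{n\notin T}J'_n$ for an \emph{infinite} $T$, since the freed positions $T$ can then be filled by a strong-measure-zero cover of $Y$ with lengths $(f_t(\varepsilon))_{t\in T}$. Concretely, one starts from a single fine cover $(I_n)_n$ (parameter $\delta$ or $\varepsilon/2^k$), keeps its intervals at their \emph{own} indices, and replaces only two positions by intervals of the maximal permitted lengths: in (b), position $0$ gets an interval of length $f_0(\varepsilon)$ swallowing $I_0\cup I_m$, and position $m$ gets an interval of length $f_m(\varepsilon)$ containing infinitely many $I_n$'s, whose existence is proved by a density/summability double-counting argument using nonoverlapping intervals of length $f_m(\varepsilon)$ inside the unbounded interval $\overline{X}$ (this is where the hypothesis on $\overline{X}$ enters — your sketch gives no mechanism for it). In (c) one needs a further dichotomy not present in your plan: either some $I^K_0$ and $I^K_N$ fit in a window of length $f_0(\varepsilon)$ (then position $N$ takes a clustering interval, which exists by boundedness), or no such pair exists, in which case one extracts an accumulation point $x$ of points $x_k\in I^k_0$ and uses the key observation that the window $(x-\tfrac12 f_0(\varepsilon),x+\tfrac12 f_0(\varepsilon))$ meets $X$ in at most one point, so the displaced fine interval costs only a single slot. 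None of these ingredients — the non-shifting bookkeeping, the summability argument in (b), the accumulation-point/one-point-window argument in (c) — appears in your proposal, so beyond part (a) it remains a plan with the essential step missing.
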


\begin{proof}
{\bf (a):} This is Corollary \ref{cor}.

In the proofs of (a) and (b) we use the following observation: if $\varepsilon\in (0,1)$, $X$ is an $(f_n)$-microscopic set and $Y$ is of strong measure zero, then to find a cover $(J_n)_n$ of $X\cup Y$ such that $|J_n|\leq f_n(\varepsilon)$, it suffices to find a cover $(J'_n)_n$ of $X$ such that $|J'_n|\leq f_n(\varepsilon)$ and $X\subseteq \bigcup_{n\not\in T} J'_n$ for some infinite set $T\subseteq\omega$.

{\bf (b):}  Let $\varepsilon\in (0,1)$. Set $\delta\in (0,1)$ such that $f_0(\delta)<\frac{1}{3}f_0(\varepsilon)$ and take any cover $(I_n)_n$ of $X$ satisfying $|I_n|<f_n(\delta)$ for all $n$. Then $|I_n|<f_0(\delta)<\frac{1}{3}f_0(\varepsilon)$ for every $n$. Since $X$ is dense in an unbounded interval, there is $m\in\omega$ and an interval $I$ of length $f_0(\varepsilon)$ such that $I_0\cup I_m\subseteq I$.

There is also an interval $J$ of length $f_m(\varepsilon)$ containing infinitely many $I_n$'s. Indeed, otherwise let $(J_n)_n$ be any sequence of nonoverlapping closed intervals of length $f_m(\varepsilon)$ such that $J_n\subseteq \overline{X}$ for every $n$. Then $\{k\in\omega :\ I_k\cap J_n\neq\emptyset\}$ is finite for each $n\in\omega$. Hence, since $X$ is dense in an unbounded interval $\overline{X}$, we get that $|J_n|\leq\sum_k |I_k\cap J_n|$ for every $n\in\omega$. But then 
$$\sum_{n} |J_n|\leq\sum_n\sum_k |I_k\cap J_n|=\sum_k\sum_n |I_k\cap J_n|\leq\sum_k |I_k|<\infty,$$ 
which contradicts the fact that $\sum_{n} |J_n|=\infty$.

We are ready to define the required cover of $X$. Let $I'_0=I$, $I'_m=J$ and $I'_n=I_n$ for all $n\neq 0,m$. Observe that $X\subseteq \bigcup_{n\not\in T} I'_n$ for $T=\{n\in\omega:\ I_n\subseteq J\}$, and hence, we are done.

{\bf (c):}  Let $\varepsilon>0$. There is $r>0$ such that $X\subseteq \overline{B}(0,r)$. For each $k\in\omega$ set a cover $(I^k_n)_n$ of the set $X$ such that $I^k_n\subseteq \overline{B}(0,r)$ and $|I^k_n|\leq f_n(\frac{\varepsilon}{2^k})$ for every $n, k\in\omega$. If for some $k$ only finitely many $I^k_n$'s are not empty, then we are done. Therefore, we can assume that these covers are infinite. Consider the following two cases. 

Assume first that there are an open interval $J$ and $N,K\in\omega$ such that $|J|\leq f_0(\varepsilon)$ and $I^K_0, I^K_N\subseteq J$. Then we can define a new cover $(I_n)_n$ of $X$ as follows. Let $I_0=J$, $I_n=I^K_n$ for all $n\neq 0,N$, and let $I_N$ be any interval of length $f_N(\varepsilon)$ containing infinitely many $I^K_n$'s. Such interval exists since $\bigcup_{n\in\omega}I^K_n\subseteq \overline{B}(0,r)$. Observe that $X\subseteq \bigcup_{n\not\in T} I_n$ for $T=\{n\in\omega:\ I^K_n\subseteq I_N\}$. Hence, we are done by the observation we made before the proof. 

Now consider the second case: there are no $N, K\in\omega$ and $J$ of length not greater than $f_0(\varepsilon)$ such that $I^K_0, I^K_N\subseteq J$. For every $k\in\omega$ pick any $x_k\in I^k_0$. The sequence $(x_k)_k$ is contained in $\overline{B}(0, r)$ so there are $x\in\mathbb{R}$ and an increasing sequence of natural numbers $(m_k)_k$ such that $x_{m_k}\to x$. We can additionally assume that $|I^{m_0}_0|\leq\frac{1}{4}f_0(\varepsilon)$ and $|x_{m_0}-x|\leq\frac{1}{4}f_0(\varepsilon)$. Notice that $(x-\frac{1}{2}f_0(\varepsilon),x+\frac{1}{2}f_0(\varepsilon))$ contains at most one element of $X$. Hence, $X\cap (x-\frac{1}{2}f_0(\varepsilon),x+\frac{1}{2}f_0(\varepsilon))$ can be covered by any interval. Thus, we can define a new cover $(I_n)_n$ of $X$ as follows. Let $I_0$ be any interval of length $f_0(\varepsilon)$ containing infinitely many sets from $(I_n^{m_0})_n$. As before, such interval exists since $\bigcup_{n\in\omega}I^{m_0}_n\subseteq \overline{B}(0,r)$. Further, take any natural number $l$ such that $I^{m_0}_l\subseteq I_0$ and define $I_l$ as any interval of length $f_l(\varepsilon)$ containing $X\cap (x-\frac{1}{2}f_0(\varepsilon),x+\frac{1}{2}f_0(\varepsilon))$. For $n\neq 0,l$ define $I_n=I^{m_0}_n$. As in the previous case, $X\subseteq \bigcup_{n\not\in T} I_n$ for $T=\{n\in\omega:\ I^{m_0}_n\subseteq I_0\}$ and the entire proof is completed.
\end{proof}

Notice that using a similar argument to the one presented in the proof of part (b), we can show that if $Y$ is of strong measure zero, $X$ is $(f_n)$-microscopic and there is $\delta>0$ such that $\overline{X}$ is a union of a family of closed intervals of length greater than $\delta$, then $X\cup Y$ is $(f_n)$-microscopic. However, we do not know if this can be strengthened even further. 

\begin{problem}
Assume that $Y$ is of strong measure zero, $X$ is $(f_n)$-microscopic and there is an interval $I$ such that $I\subseteq\overline{X}$. Does $X\cup Y$ always belong to $\Mf$?
\end{problem}

The next theorem is an answer to a problem posed by Horbaczewska in~\cite{mikrohorba}.

\begin{theorem}
\label{tw}
The family of nanoscopic sets is not an ideal, i.e., there are two nanoscopic sets such that their union is not nanoscopic.
\end{theorem}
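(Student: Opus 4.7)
The plan is to construct two specific nanoscopic sets $A, B \subseteq [0,1]$ whose union fails to be nanoscopic, by exploiting a rigidity phenomenon of the sequence $(\varepsilon^{2^k})_k$: unlike strong measure zero covers, a nanoscopic cover cannot be freely reindexed along a sparse subsequence of positions. Concretely, a cover with $|J_j|\leq \varepsilon^{2^{k_j}}$ for an infinite sparse sequence $k_j$ with $k_j-j\to\infty$ cannot be obtained from any fixed-parameter nanoscopic condition $|I_j|\leq \delta^{2^j}$, since any such $\delta^{2^j}=(1/2)^{c\cdot 2^j}$ eventually dwarfs the required bound $(1/2)^{2^{k_j}}$.

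The construction places $A$ and $B$ in two well-separated regions of $[0,1]$, say $A\subseteq[0,1/4]$ and $B\subseteq[3/4,1]$, separated by a gap of length $1/2$. Within each region I would arrange $A$ (and symmetrically $B$) as an explicit example of an \emph{incompressible} nanoscopic set: one which is nanoscopic but for which no cover $(J_j)$ with $|J_j|\leq (1/2)^{2^{k_j}}$ exists for any infinite sparse sequence $(k_j)$. A natural candidate is a countable union of small Cantor-type pieces $B_n$ at disjoint locations, one for each scale $2^k$, where each $B_n$ is an uncountable subset of a short interval $J_n$ arranged so that it forces the use of the specific nanoscopic position matching its diameter.

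To verify $A,B\in\Nano$, one constructs for each $\varepsilon\in(0,1)$ a cover explicitly, scale by scale, using the budget $\varepsilon^{2^k}$ at position $k$ to cover exactly the piece $B_k$ of matching diameter. To show $A\cup B\notin\Nano$ at $\varepsilon_0=1/2$, suppose $(I_k)_k$ were such a cover. Then $|I_k|\leq (1/2)^{2^k}\leq 1/4$ for every $k\geq 1$, which is strictly smaller than the gap of length $1/2$ between $[0,1/4]$ and $[3/4,1]$; hence each such $I_k$ lies entirely in one of the two regions. This induces a partition $\{k\geq 1\}=T_A\sqcup T_B$, and by pigeonhole at least one of $T_A,T_B$ is sparse in the sense that $k_j-j\to\infty$ along it. The resulting sparse-index cover of $A$ (or $B$) contradicts the incompressibility property.

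The principal obstacle is constructing the incompressible pieces $A$ and $B$. This requires both an upper bound (an explicit nanoscopic cover of each set at every $\varepsilon$) and a lower bound (ruling out sparse-index covers at $\varepsilon_0=1/2$). The lower bound amounts to a combinatorial packing estimate: at infinitely many scales, each set must contain more separated components of nontrivial complexity than fit into any sparse collection of positions in the nanoscopic sequence. Calibrating the sizes $|J_n|$ and the internal structure of the pieces $B_n$ so that both bounds hold simultaneously is the technical heart of the proof.
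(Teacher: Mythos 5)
There is a genuine gap: the key object your argument rests on cannot exist. You need an \emph{incompressible} nanoscopic set $A$, i.e.\ a nanoscopic set admitting no cover $(J_j)_j$ with $|J_j|\leq (1/2)^{2^{k_j}}$ along any increasing sequence with $k_j-j\to\infty$. First note that $k_j-j$ counts the naturals below $k_j$ missing from $\{k_0,k_1,\ldots\}$, so $k_j-j\to\infty$ just means that the set of used positions has infinite complement. Now take any bounded nanoscopic set $A$ (yours lies in $[0,1/4]$) and let $B=\{10,20,30,\ldots\}$; this $B$ is countable, hence of strong measure zero, so by Theorem \ref{smz-nano} (or Theorem \ref{suma}(c)) the set $A\cup B$ is nanoscopic. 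Take a cover $(I_k)_k$ of $A\cup B$ with $|I_k|\leq (1/2)^{2^k}\leq \frac12$. Each $I_k$ contains at most one point of $B$ and cannot meet both $A$ and $B$, so infinitely many indices are spent on points of $B$, and the set $T=\{k:\ I_k\cap A\neq\emptyset\}$ is co-infinite with $A\subseteq\bigcup_{k\in T}I_k$. Enumerating $T$ increasingly as $(k_j)_j$ and putting $J_j=I_{k_j}$ yields exactly a sparse-index cover of $A$ with $|J_j|\leq(1/2)^{2^{k_j}}$ and $k_j-j\to\infty$. Hence every bounded nanoscopic set is compressible in your sense, and the contradiction at the end of your argument can never be reached. (A smaller point: the pigeonhole step is stated backwards --- if both $T_A$ and $T_B$ are infinite then \emph{both} are sparse in your sense, while if one is finite the other is co-finite and not sparse; but this is secondary to the nonexistence of the incompressible sets.)

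Your proposed candidate makes matters worse rather than better: a countable union of compact Cantor-type pieces is an $\mathbf{F_\sigma}$ set, so if it is nanoscopic it belongs to $\Nano^{\star}$, and by Theorem \ref{Fsigma} the union of any two sets covered by $\mathbf{F_\sigma}$ nanoscopic sets is again nanoscopic; no example of that shape can witness the failure of additivity. The paper's route is structurally different: it builds a single \emph{compact} set that is $2$-nanoscopic but not nanoscopic (Lemma \ref{lem2}, a nested-interval counting construction), and then splits it into two nanoscopic sets by a decomposition lemma (Lemma \ref{lem1}). There the two pieces are interleaved at every scale, and the obstruction to covering their union is a joint counting argument about how the pieces sit inside each other --- not an individual rigidity property of each piece. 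Placing $A$ and $B$ in two far-apart blocks, as you do, is precisely what forces you into the impossible requirement of individual incompressibility. Finally, even setting all this aside, the proposal defers the entire construction (``the technical heart''), so no proof is actually given.
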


The proof is based on two lemmas.

\begin{definition}
\label{def}
Fix a sequence $(f_n)_n$ and $m\in\omega$. A set $M$ is called \emph{$m$-$(f_n)_n$-microscopic} if for every $\varepsilon\in (0,1)$ there is a sequence of intervals $(I_k)_k$ such that $M\subset\bigcup_k I_k$ and $|I_{mk}|,\ldots,|I_{mk+m-1}|\leq f_k(\varepsilon)$ for all $k\in\omega$. A $m$-$(x^{2^n})_n$-microscopic set is called \emph{$m$-nanoscopic}.
\end{definition}

\begin{lemma}
\label{lem1}
For any sequence $(f_n)_n$ if $X$ is a compact $m$-$(f_n)_n$-microscopic set, then there are $(f_n)_n$-microscopic sets $A_0,A_1,\ldots,A_{m-1}$ such that $X=\bigcup A_k$.
\end{lemma}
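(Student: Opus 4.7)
My plan is to exploit the compactness of $X$ to reduce any $m$-$(f_n)_n$-cover to a finite one (in the spirit of Lemma~\ref{zwarte}), then to fix a single finite cover at a convenient scale $\varepsilon_0$ and use it to define the decomposition. The $m$-$(f_n)_n$-microscopic property at arbitrary scales will then provide the flexibility needed to verify microscopicity of each piece.

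Concretely, I would fix $\varepsilon_0 = \tfrac{1}{2}$ and, using compactness, extract a finite cover $(I_{mk+r})_{k<N,\,r<m}$ of $X$ with $|I_{mk+r}|\le f_k(\varepsilon_0)$. I then set
\[
A_j \;:=\; X\cap\bigcup_{k<N} I_{mk+j}, \qquad j=0,1,\dots,m-1,
\]
so that $X=A_0\cup A_1\cup\dots\cup A_{m-1}$ and each $A_j$ is compact.

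Checking that each $A_j\in\Mf$ splits into two cases. When $\varepsilon\ge\varepsilon_0$, the $N$ intervals $I_j,I_{m+j},\dots,I_{(N-1)m+j}$ cover $A_j$ with sizes $|I_{mk+j}|\le f_k(\varepsilon_0)\le f_k(\varepsilon)$ by monotonicity of $f_k$ in $\varepsilon$, and padding at levels $k\ge N$ (for instance with intervals of length $f_k(\varepsilon)$, or singletons) yields a genuine $(f_n)_n$-microscopic cover at scale $\varepsilon$. The case $\varepsilon<\varepsilon_0$ will be the main obstacle: here I would take a fresh finite $m$-$(f_n)_n$-cover $(J_{mk+r})_{k<M,\,r<m}$ of $X$ at scale $\varepsilon$ and use the additional structural fact that each piece $A_j\cap I_{mk+j}$ lies inside a single interval of diameter at most $f_k(\varepsilon_0)$ to reshuffle the $J$-cover—via a pigeonhole/merging argument analogous to the one used in the proof of Lemma~\ref{zwarte}—into $m$ single-interval-per-level covers, one for each $A_j$.

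The technical heart of the argument is precisely this small-$\varepsilon$ reshuffling: simultaneously producing a legitimate $(f_n)_n$-cover for each of the $A_j$'s while respecting all size and level constraints. I expect the nonincreasingness of $(f_n)_n$, together with the geometric control that each $A_j\cap I_{mk+j}$ sits in a small known interval and hence only meets boundedly many intervals of $(J_k)_k$ at each level, to make a careful level-by-level assignment feasible, much as the two-cover pigeonhole in Lemma~\ref{zwarte} allows one to drop an initial prefix while preserving the size profile.
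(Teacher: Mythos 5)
Your decomposition is built from a \emph{single} finite cover at the one scale $\varepsilon_0=\tfrac12$, and the entire difficulty of the lemma is then concentrated in the step you explicitly leave open: showing that each piece $A_j=X\cap\bigcup_{k<N}I_{mk+j}$ admits one-interval-per-level covers at every smaller scale $\varepsilon$. This is a genuine gap, not a routine technicality, and there is good reason to believe no ``reshuffling'' in the spirit of Lemma~\ref{zwarte} can close it. The fresh cover $(J_{mk+r})_{k<M,\,r<m}$ at scale $\varepsilon$ has $m$ intervals per level, and $A_j$ may meet all $m$ of them at every level; to produce a legitimate $(f_n)_n$-cover of $A_j$ you must compress $m$ intervals per level into one, i.e.\ dilate indices by a factor of $m$, and an interval of length $f_k(\varepsilon)$ is in general far too long to be placed at level $mk+i$ when $(f_n)_n$ decreases fast (e.g.\ $f_n(x)=x^{2^n}$). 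The trick in Lemma~\ref{zwarte} only absorbs a \emph{fixed finite prefix} of indices by merging a few intervals at the cost of a bounded index shift; it does not permit a per-level doubling (or $m$-folding) of the interval count. Indeed, Lemma~\ref{lem2} exhibits a compact $2$-nanoscopic set that is not nanoscopic, so a subset of an $m$-$(f_n)_n$-microscopic set carved out by a single-scale cover has no a priori reason to be $(f_n)_n$-microscopic; your observation that $A_j\cap I_{mk+j}$ sits in an interval of length $f_k(\varepsilon_0)$ and meets ``boundedly many'' $J$-intervals per level (each level has only $m$ anyway) does not produce the required one-per-level covers.

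The paper avoids this by making the decomposition itself multi-scale: using compactness and Lemma~\ref{zwarte} it concatenates finite covers at \emph{every} scale $\tfrac1{2^{n+1}}$ into one sequence $(I_k)_k$, arranged along an increasing sequence $(l_n)_n$ so that the block at scale $\tfrac1{2^{n+1}}$ occupies levels $l_n\le k<l_{n+1}$, and then defines $A_j=\bigcap_i\bigcup_{k\ge i}I_{mk+j}\cap X$, the set of points lying in intervals of residue class $j$ infinitely often. Membership $X=\bigcup_j A_j$ follows by pigeonhole, and for each $\varepsilon>\tfrac1{2^{n+1}}$ the tail $(I_{m(k+l_n)+j})_k$ is automatically a one-interval-per-level cover of $A_j$ of the right lengths. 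In short, the pieces must be engineered simultaneously at all scales; a decomposition fixed at one scale, with the small-$\varepsilon$ case deferred to a hoped-for pigeonhole, does not prove the lemma.
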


\begin{proof}
Fix a sequence $(f_n)_n$. We define inductively an increasing sequence of natural numbers $(l_k)_k$ and a sequence of closed intervals $(I_k)_k$.

Let $l_0=0$. Since $X$ is compact and $m$-$(f_n)_n$-microscopic, there is $l_1>l_0$ and a finite sequence of closed intervals $(I_k)_{ml_0\leq k<ml_1}$ such that $X\subset\bigcup_{ml_0\leq k<ml_1} I_k$ and 
$$|I_{mk}|,\ldots,|I_{mk+m-1}|=f_k\left(\frac{1}{2^{0+1}}\right)$$
for all $l_0\leq k<l_1$. Suppose that $l_i$ and intervals $I_k$, for $i\leq n$ and $k<ml_n$, are constructed. By compactness of $X$ and Lemma \ref{zwarte}, there is $l_{n+1}>l_n$ and a finite sequence of closed intervals $(I_k)_{ml_n\leq k<ml_{n+1}}$ such that $X\subset\bigcup_{ml_n\leq k<ml_{n+1}} I_k$ and 
$$|I_{mk}|,\ldots,|I_{mk+m-1}|=f_k\left(\frac{1}{2^{n+1}}\right)$$
for all $l_n\leq k<l_{n+1}$.

Define:
$$A_0=\bigcap_i\bigcup_{k\geq i}I_{mk}\cap X,$$
$$\vdots$$
$$A_{m-1}=\bigcap_i\bigcup_{k\geq i}I_{mk+m-1}\cap X.$$

Observe that $X=\bigcup A_k$. Indeed, if $x\in X$, then there is a sequence $(t_i)_{i}$ such that $x\in\bigcap_i I_{t_i}$. There exists $j<m$ such that infinitely many $t_i$'s are of the form $t_i=mk+j$ for some $k\in\omega$. Then, we get that $x\in A_j$.

We will show that each of the sets $A_j$ is $(f_n)_n$-microscopic. Fix $j<m$.
Given $\varepsilon\in (0,1)$ there is $n$ such that $\varepsilon>\frac{1}{2^{n+1}}$. Let $J_k=I_{m(k+l_n)+j}$ for $k\in\omega$. Then, 
$$|J_k|=|I_{m(k+l_n)+j}|\leq f_k\left(\frac{1}{2^{n+1}}\right)<f_k\left(\varepsilon\right)$$
and the intervals $J_k$, for $k\in\omega$, cover the set $A_j$. 
\end{proof}

We do not know whether compactness is crucial in the above lemma, even in the case of nanoscopic sets.

\begin{problem}
Can every $m$-nanoscopic set be decomposed into $m$ nanoscopic sets?
\end{problem}

\begin{lemma}
\label{lem2}
There is a compact $2$-nanoscopic set which is not nanoscopic.
\end{lemma}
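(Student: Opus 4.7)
My plan is to construct $X$ as the disjoint union of two translated copies of a carefully chosen compact Cantor-type set $C$, namely $X = C \cup (C+2)$ with $C \subseteq [0, 1/2]$. Then $2$-nanoscopicity will follow by interleaving nano covers of the two copies, while nanoscopicity will fail via a counting argument that exploits the separation between the two components.

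Take $C = \bigcap_n C_n$, where $C_n$ is a union of $2^n$ disjoint closed intervals of length $r_n$, obtained from $C_{n-1}$ by splitting each level-$(n-1)$ interval into two pieces of length $r_n$ placed at its endpoints. The sequence $(r_n)_n$ is chosen so that $C$ is itself nanoscopic for every $\varepsilon \in (0,1)$, and so that the ``capacity sum'' $S(\varepsilon) := \sum_{k \in \omega} 2^{-d(\varepsilon^{2^k})}$, with $d(\ell) := \min\{n : r_n \leq \ell\}$, is finite and strictly less than $2$ at some scale $\varepsilon_0 \in (0, 1)$.

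For $2$-nanoscopicity: given $\varepsilon \in (0, 1)$, take nano covers $(I^0_k)_k$ of $C$ and $(I^1_k)_k$ of $C+2$ with $|I^i_k| \leq \varepsilon^{2^k}$, and set $J_{2k} := I^0_k$, $J_{2k+1} := I^1_k$; then $|J_{2k}|, |J_{2k+1}| \leq \varepsilon^{2^k}$ and $(J_k)_k$ covers $X$. For non-nanoscopicity: suppose $(I_k)_k$ is a nano cover of $X$ at $\varepsilon_0$, so $|I_k| \leq \varepsilon_0 < 1 \leq \operatorname{dist}(C, C+2)$ and each $I_k$ meets at most one component. Partition $\omega = K_0 \sqcup K_1$ accordingly; each subfamily $(I_k)_{k \in K_i}$ covers its component, and the standard fan-out bound for a binary Cantor set (each $I_k$ covers at most $2^{N - d(|I_k|)}$ out of $2^N$ level-$N$ pieces) gives $\sum_{k \in K_i} 2^{-d(\varepsilon_0^{2^k})} \geq 1$. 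Adding both yields $S(\varepsilon_0) \geq 2$, contradicting the choice of $(r_n)$.

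The main obstacle will be in the selection of $(r_n)_n$: we need $S(\varepsilon) \geq 1$ for every $\varepsilon$ (a necessary condition for $C$ to be nanoscopic, by the same fan-out bound) while simultaneously $S(\varepsilon_0) < 2$ at one scale. A uniform choice like $r_n = 2^{-2^n}$ produces $S(\varepsilon)$ that drops below $1$ for $\varepsilon \leq 1/4$, so $C$ itself fails to be nano; any faster uniform decay forces $S \equiv \infty$, and then $X$ itself becomes nano. The fix will therefore be a non-uniform construction---for instance, alternating the decay rate across levels, or varying the branching factor at carefully chosen levels---so that $S$ remains at least $1$ universally while dipping into the narrow window $[1, 2)$ at one engineered scale $\varepsilon_0$.
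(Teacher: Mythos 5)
Your own closing paragraph identifies the critical point, but the obstacle you flag there is not a delicate tuning problem --- it is an impossibility, and it sinks the whole plan. Whatever capacity function $c(\ell)$ you use to bound the $\mu$-mass of $C$ coverable by one interval of length $\ell$ (your $2^{-d(\ell)}$, or the correct gap-based version, or the analogue for varying branching factors), the sum $S_c(\varepsilon)=\sum_k c(\varepsilon^{2^k})$ satisfies the re-indexing identity $S_c(\varepsilon^{2^m})=\sum_{k\geq m}c(\varepsilon^{2^k})$, because $(\varepsilon^{2^m})^{2^k}=\varepsilon^{2^{m+k}}$: the capacity sum at scale $\varepsilon^{2^m}$ is exactly the $m$-th tail of the capacity sum at scale $\varepsilon$. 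So if your refutation works at some $\varepsilon_0$, i.e. $S_c(\varepsilon_0)<2$, then $S_c(\varepsilon_0)<\infty$ and its tails tend to $0$; hence $S_c(\varepsilon_0^{2^m})<1$ for large $m$. But $\sum_k c(\varepsilon^{2^k})\geq 1$ at \emph{every} scale is necessary not only for $C$ to be nanoscopic (killing your interleaving argument) but even for $X=C\cup(C+2)$ to be $2$-nanoscopic: a $2$-nano cover at scale $\varepsilon$ provides two intervals of each length $\varepsilon^{2^k}$, each meeting one copy, so $2=\mu(X)\leq 2\sum_k c(\varepsilon^{2^k})$. Thus any ``two separated copies'' example that can be refuted by a pure mass/capacity count at one scale automatically fails to be $2$-nanoscopic at the scales $\varepsilon_0^{2^m}$; no non-uniform choice of $(r_n)_n$, alternation of decay rates, or change of branching pattern escapes this, since the obstruction is the tail identity and not the particular parameters. (A secondary point, which would also matter because your entire margin is a factor of $2$: the fan-out exponent must be defined via the gaps of $C$ rather than the lengths $r_n$; the two differ by one level, i.e. by exactly a factor $2$ in capacity.)

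This is why the paper's proof does not use a mass bound at all. It builds a single nested compact set in which the interval indexed $j$ has $2^{j+1}$ children whose lengths come in equal consecutive pairs (so $2$-nanoscopicity is built in), and refutes nanoscopicity at the single scale $\varepsilon=\frac14$ by a level-by-level pigeonhole: along the chosen branch all cover intervals $J_i$ with $i<f(j)$ are already known to be disjoint from the current node, only the $2^{j+1}-f(j)$ intervals $J_{f(j)},\dots,J_{2^{j+1}-1}$ are long enough to swallow a child, so at least $f(j)$ children survive, and one diagonalizes to a point of $X$ missed by the cover. The extra information exploited there --- and discarded by any capacity count --- is positional: which cover intervals have already been excluded by the time the construction reaches a given level, measured against a branching factor that grows with the level. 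To complete the lemma you would need an argument of this finer kind rather than the counting scheme you propose.
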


\begin{proof}
In the construction we will need the following two technical partitions of $\omega$ into finite sets:
\begin{itemize}
	\item let $T_{-1}=\{0,1\}$ and $T_i=\{2^{i+1},2^{i+1}+1,\ldots,2^{i+2}-1\}$ for each $i\in\omega$;
	\item let $S_0=T_{-1}$ and $S_{i+1}=\bigcup_{j\in S_i}T_j$ for $i>0$.
\end{itemize}

Let $(I_k)_k$ be a sequence of closed intervals satisfying the following conditions:
\begin{itemize}
	\item if there is $i$ such that $k$ and $n$ both are in $S_i$, then $I_k\cap I_n=\emptyset$;
	\item if $k\in T_n$, then $I_k\subset I_n$;
	\item $|I_{2k}|=|I_{2k+1}|=\left(\frac{1}{2^{2^{1}}}\right)^{2^k}=\frac{1}{2^{2^{k+1}}}$ for all $k$;
	\item for all $i$ the distances between each two intervals from $(I_k)_{k\in T_i}$ are the same and biggest possible.
\end{itemize}
Observe that this construction is possible, i.e., each $I_n$ is long enough to place all $I_k$'s for $k\in T_n$.

The required set is defined by $X=\bigcap_n X_n$, where $X_n=\bigcup_{k\in S_n} I_k$. Clearly, $X$ is compact and $2$-nanoscopic. We will show that $X$ is not nanoscopic. 

Firstly, we need to introduce a function $f\colon\omega\to\omega$ given by $f(k)=2^{i+1}$, where $i\in\omega\cup\{-1\}$ is the unique number such that $k\in T_i$. Observe that for any sequence $(k_j)_j$ satisfying $k_{j+1}\in T_{k_j}$ we have $f(k_j)<f(k_{j+1})$, for all $j$, and $\lim_j f(k_j)=\infty$.

We are ready to show that $X$ is not nanoscopic. Let $\varepsilon=\frac{1}{2^2}$ and take any sequence of intervals $(J_k)_k$ such that $|J_{k}|<(\frac{1}{2^{2}})^{2^k}=\frac{1}{2^{2^{k+1}}}$ for all $k$. We will prove that $J_k$'s cannot cover the whole set $X$ by constructing a decreasing sequence of compact sets $(K_n)_n$ such that each $K_n$ is one of the intervals $I_k$, for $k\in S_n$, and if $K_n=I_k$, then $K_n\cap\bigcup_{i<f(k)}J_i=\emptyset$. It follows from the conditions imposed on $K_n$'s and the properties of the function $f$, that 
$$\emptyset\neq\bigcap_n K_n\subset X\setminus\bigcup_i J_i.$$
Therefore, the construction of the sequence $(K_n)_n$ will conclude the entire proof. 

We inductively define $K_n$'s as follows. There is $m\in T_{-1}$ such that $J_0$ is disjoint with $I_m$. Let $K_0=I_m$. Notice that $K_0\cap\bigcup_{i<f(m)}J_i=K_0\setminus J_0=\emptyset$. Suppose now that $K_i$, for $i\leq n$, are already defined and $K_n=I_j$, for some $j\in S_n$, is such that $K_n\cap\bigcup_{i<f(j)}J_i=\emptyset$. We have
$$X_{n+1}\cap I_j=\bigcup_{k\in T_j} I_k.$$
Hence, the set $X_{n+1}\cap I_j$ is a union of $2^{j+1}$ many intervals $I_{2^{j+1}},\ldots,I_{2^{j+2}-1}$ such that
\begin{enumerate}
	\item they are pairwise disjoint,
	\item each of them is disjoint with $\bigcup_{i<f(j)}J_i$,
	\item each of them has length at least $\frac{1}{2^{2^{2^{j+1}}}}$. 
\end{enumerate}
Therefore, there are only $2^{j+1}-f(j)$ intervals $J_{f(j)},\ldots,J_{2^{j+1}-1}$ such that each of them can completely cover one of the intervals $I_k$, for $k\in T_j$, since $|J_{k}|<(\frac{1}{2^{2}})^{2^k}=\frac{1}{2^{2^{k+1}}}$ for all $k$. Observe also that $0<f(j)\leq 2^{j+1}$. Hence, there are $f(j)$ many $m\in T_j\subset S_{n+1}$ such that $I_m\cap\bigcup_{i<f(m)}J_i=\emptyset$ (since $f(m)=2^{j+1}$ for all $m\in T_j$). Take one of those $m$'s and let $K_{n+1}=I_m$.
\end{proof}

Now we can proceed to the proof of Theorem \ref{tw}.

\begin{proof}
By Lemma \ref{lem2} there is a compact $2$-nanoscopic set $X$ that is not nanoscopic. By Lemma \ref{lem1} there are nanoscopic sets $A$ and $B$ such that $X=A\cup B$. Then $A$ and $B$ are the required sets.
\end{proof}

Of course our proof works for a wider class of sequences of functions satisfying some technical properties. However, it does not lead to a characterization of sequences $(f_n)_n$ for which $\Mf$ is not an ideal -- it does not work for instance in the following case.

\begin{problem}
Let $f_n(\varepsilon)=\frac{\varepsilon}{2^n}$ for all $\varepsilon\in (0,1)$ and $n\in\omega$. Is the family $\Mf$ an ideal?
\end{problem}

Since $\Nano$ is not an ideal, the following question is natural.

\begin{problem}
How does the ideal/$\sigma$-ideal generated by nanoscopic sets look like? Is it of the form $\Mf$ for some sequence of functions $(f_n)_n$?
\end{problem}

\section{Picoscopic sets}

\begin{definition}
A set $M$ is called \emph{picoscopic} if it is $(f_n)$-microscopic for $f_n(\varepsilon)=\varepsilon^{(n+1)!}$ for all $\varepsilon\in (0,1)$ and $n\in\omega$.
\end{definition}

\begin{theorem}
\label{pico-ideal}
The family of picoscopic sets is not an ideal, i.e., there are two picoscopic sets such that their union is not picoscopic.
\end{theorem}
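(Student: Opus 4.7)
The plan is to imitate the proof of Theorem~\ref{tw}. First, specializing Definition~\ref{def} to $f_n(\varepsilon) = \varepsilon^{(n+1)!}$ defines, for each $m \in \omega$, the notion of an $m$-picoscopic set. Since Lemma~\ref{lem1} holds for arbitrary $(f_n)_n$, every compact $m$-picoscopic set is a union of $m$ picoscopic sets. Hence, if I can exhibit, for some $m \ge 2$, a compact $m$-picoscopic set $X \notin \Pico$, then writing $X = A_0 \cup \cdots \cup A_{m-1}$ with $A_i \in \Pico$ and using induction forces some pair $A_i \cup A_j$ to lie outside $\Pico$: otherwise $\Pico$ would be closed under all finite unions, contradicting $X \notin \Pico$.

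The crux is therefore the picoscopic analogue of Lemma~\ref{lem2}. Fix $m \ge 2$ and partition $\omega$ into consecutive blocks $T_{-1}, T_0, T_1, \ldots$ with $|T_i|$ growing very rapidly (to be tuned below). Set $S_0 = T_{-1}$ and $S_{n+1} = \bigcup_{j \in S_n} T_j$, and choose nested closed intervals $(I_k)_k$ so that $I_k \subset I_j$ whenever $k \in T_j$, the subintervals $(I_k)_{k \in T_j}$ are pairwise disjoint and evenly spaced inside $I_j$, and the lengths satisfy $|I_{mk}| = \cdots = |I_{mk+m-1}| = 2^{-(k+1)!}$. Put $X = \bigcap_n \bigcup_{k \in S_n} I_k$. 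Then $X$ is compact, and this very cover $(I_k)_k$ witnesses $m$-picoscopicity for $\varepsilon = \tfrac12$; $m$-picoscopicity for arbitrary $\varepsilon \in (0,1)$ will follow by truncating and refining the cover at a sufficiently deep level of the tree, as in the nanoscopic case.

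To show $X \notin \Pico$, fix $\varepsilon_0 < \tfrac12$ and any intervals $(J_k)_k$ with $|J_k| < \varepsilon_0^{(k+1)!}$; the goal is to produce a point of $X$ missed by every $J_k$. Define a control function $f$ on the tree so that, whenever $k \in T_j$, (i) no $J_i$ with $i > f(k)$ fully covers $I_k$, and (ii) by the uniform spacing, no $J_i$ with $i > f(j)$ intersects two distinct children of $I_j$. Inductively build compact intervals $K_0 \supset K_1 \supset \cdots$, each of the form $K_n = I_{k_n}$ with $k_n \in S_n$, satisfying $K_n \cap \bigcup_{i \le f(k_n)} J_i = \emptyset$. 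At the inductive step, condition (ii) forces each $J_i$ with $f(k_n) < i \le f(m)$ (for $m \in T_{k_n}$) to intersect at most one child of $K_n$, so if $|T_{k_n}|$ exceeds the total number of such $J_i$'s, at least one child $I_m$ is disjoint from every $J_i$ with $i \le f(m)$; take $K_{n+1} = I_m$. Then $\emptyset \neq \bigcap_n K_n \subset X \setminus \bigcup_k J_k$, contradicting coverage.

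The main obstacle is the simultaneous tuning of the block sizes $|T_i|$: they must be small enough that the children of $I_j$ of size $2^{-(k+1)!}$ (for $k \in T_j$) fit inside $I_j$ with positive gaps, yet large enough to win every pigeonhole step. The factorial decay is sharply in our favor, though: the adversary's intervals $J_i$ vanish so fast with $i$ that only a bounded initial range of indices genuinely competes, so any $|T_i|$ growing faster than the cumulative cost of that range suffices. Once $X$ is in hand, the first paragraph concludes the proof.
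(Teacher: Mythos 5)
Your plan is essentially the paper's own proof of Theorem~\ref{pico-ideal}: the paper likewise deduces it from Lemma~\ref{lem1} (valid for every $(f_n)_n$) together with the observation that the construction of Lemma~\ref{lem2} adapts to give a compact $2$-picoscopic set that is not picoscopic, and it also notes the stronger Theorem~\ref{smz}, which implies this statement since a singleton is picoscopic. One remark on your tuning discussion: no rapidly growing blocks are needed (nor is arbitrary growth safe, since the control function must stay near $\lfloor k/2\rfloor$ so that an early $J_i$ cannot swallow a whole parent interval), and in fact the doubling blocks $T_i$ of Lemma~\ref{lem2} already work verbatim, because the length ratio $(k+2)!/(k+1)!=k+2\geq 2$ is at least as favorable as the ratio $2^{k+2}/2^{k+1}=2$ used in the nanoscopic case.
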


Below we prove an even stronger Theorem \ref{smz}. This Theorem can be proved similarly to Theorem \ref{tw} -- it follows from Lemma \ref{lem1} and the fact that Lemma \ref{lem2} works in the case of picoscopic sets, i.e., there is a compact $2$-picoscopic set which is not picoscopic.

\begin{theorem}
\label{smz}
There are a picoscopic set $X$ and a point $x\in\mathbb{R}$ such that $X\cup\{x\}$ is not picoscopic.
\end{theorem}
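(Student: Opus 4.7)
The strategy mirrors the proof of Theorem~\ref{tw}. First, I would construct a compact $2$-picoscopic set $Y$ that is not picoscopic (the picoscopic analogue of Lemma~\ref{lem2}), arranged asymmetrically so that in a natural $2$-picoscopic cover $(I_k)_k$ of $Y$, the odd-indexed intervals $I_1\supset I_3\supset I_5\supset\cdots$ form a nested chain shrinking to a single point $x\in Y$, while the even-indexed intervals $(I_{2k})_k$ carry the essential Cantor-like structure responsible for non-picoscopicness. Fix a witness $\varepsilon_0\in(0,1)$ and take $|I_{2k}|=|I_{2k+1}|=\varepsilon_0^{(k+1)!}$; choose the branching of the even subtree at depth $n$ to grow fast enough that the pigeonhole argument of Lemma~\ref{lem2} transfers to the factorial bounds.

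Next, apply Lemma~\ref{lem1} to $Y$ (with $m=2$), picking the cover in the inductive construction to coincide with the asymmetric one above. This yields picoscopic sets $A_0,A_1$ with $Y = A_0\cup A_1$ and $A_j = \bigcap_i\bigcup_{k\ge i}I_{2k+j}\cap Y$. Because the odd intervals form a nested chain with $\bigcap_k I_{2k+1} = \{x\}$, we have $A_1=\{x\}$. Setting $X=A_0$, we conclude that $X$ is picoscopic while $X\cup\{x\} = Y$ is not, which proves the theorem.

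The main obstacle is the picoscopic analogue of Lemma~\ref{lem2}: one must design the even subtree so that each child interval fits geometrically inside its parent with length $\varepsilon_0^{(k+1)!}$, and so that the branching at each depth is rich enough that any putative 1-picoscopic cover $(J_k)_k$ with $|J_k|\le\varepsilon_0^{(k+1)!}$ leaves, at each level $n$, some even interval not covered by $J_0,\ldots,J_{N_n}$ for an appropriate $N_n$, producing a nested sequence of compact sets whose intersection is a nonempty subset of $Y$ missed by $\bigcup_k J_k$. The factorial gap $(k+2)!-(k+1)!$ is astronomically larger than the nanoscopic gap $2^{k+1}-2^k$, providing ample slack to execute the pigeonhole argument, but the combinatorial bookkeeping that simultaneously trivializes the odd branch without compromising the non-picoscopic witness is the technical heart of the adaptation.
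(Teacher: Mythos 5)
There is a genuine gap, and it is not just the unfinished ``combinatorial bookkeeping'': the object you propose to construct cannot exist. Your plan produces a \emph{compact} $2$-picoscopic set $Y$ that is not picoscopic and decomposes as $Y=A_0\cup\{x\}$ with $A_0$ picoscopic. But $A_0\subseteq Y$ is then bounded, and a singleton is of strong measure zero, so Theorem~\ref{suma}(c) (applied to the admissible sequence $f_n(\varepsilon)=\varepsilon^{(n+1)!}$) gives that $A_0\cup\{x\}=Y$ \emph{is} picoscopic --- contradicting the very property of $Y$ you need. In other words, any bounded (in particular any compact) picoscopic set absorbs single points, so the witness $X$ in Theorem~\ref{smz} must be unbounded, and no adaptation of the Lemma~\ref{lem2}/Lemma~\ref{lem1} scheme carried out inside one compact set can yield the single-point statement. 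The symmetric version of that scheme does transfer to picoscopic sets, but it only proves Theorem~\ref{pico-ideal} (two picoscopic sets with non-picoscopic union), where both pieces $A_0,A_1$ are large; forcing $A_1$ to degenerate to a point is exactly what Theorem~\ref{suma}(c) forbids in the bounded setting.

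There is also a structural reason why ``the pigeonhole argument of Lemma~\ref{lem2} transfers'' is too optimistic, independent of boundedness. In Lemma~\ref{lem2} the adversary's $1$-cover is short by \emph{half of the intervals at every index}, and the counting at each node exploits this recurring deficit. In the single-point problem the adversary's cover $(J_k)_k$ is deficient by exactly \emph{one} interval $J_N$, and the whole difficulty is to propagate this single missing index through infinitely many levels. The paper's proof handles this with the Spacing Algorithm for Picoscopic Sets: condition (iii) controlling what a block of $J_k$'s can cover, partitions $B(n,k)$ of indices with pairwise infinite intersections so that each new level is spread inside the intervals of \emph{all} previous levels, the bookkeeping function $g$, and, consistently with Theorem~\ref{suma}, initial intervals of every level placed in $[0,+\infty)$ at mutual distance at least $\varepsilon$, making $X$ unbounded. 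The factorial growth provides no ``slack'' that substitutes for this mechanism; if you want to prove Theorem~\ref{smz}, you need a construction of this cross-level type rather than the tree-plus-decomposition route.
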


For simplicity, we write $F(B)=\bigcup_{n\in B}\{4^n,\ldots,4^{n+1}-1\}$ for $B\subset\omega$. We will need the following technical lemma. It uses some ideas from the Spacing Algorithm for Microscopic Sets proved in \cite{add}.

\begin{lemma}[Spacing Algorithm for Picoscopic Sets]
Let $B\subset\omega$, $m\in\omega$ and $I$ be any interval of length at least $13\left(\frac{1}{13}\right)^{(\min F(B)+1)!}$. Moreover, let $L$ denote the least number such that $13\frac{1}{13^{(L+1)!}}<|I|$. Then in $I$ one can define intervals $I_k$ for all $k\in F(B)$ in such a way that:
\begin{enumerate}
	\item[(i)] $|I_k|=\frac{1}{13^{(m+1)!(k+1)!}}$ for each $k\in F(B)$;
	\item[(ii)] the distance between $I_n$ and $I_k$ is at least $\frac{1}{13^{(l+1)!}}$, where $l=\min\{n,k\}$;
	\item[(iii)] given $b\in B$, the sum of any sequence of intervals $(J_k)_{L\leq k<4^b}$ satisfying $|J_k|\leq\frac{1}{13^{(k+1)!}}$, for all $L\leq k<4^b$, cannot cover more than one third of the intervals $I_k$ for $k\in F(\{b\})$.
\end{enumerate}
\end{lemma}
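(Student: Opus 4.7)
The plan is a Cantor-like hierarchical placement: enumerate $B=\{b_0<b_1<\cdots\}$, so that $\min F(B)=4^{b_0}$, and for each level $b\in B$ allocate a sub-interval $\widehat I_b\subseteq I$, the sub-intervals pairwise disjoint, in which the $3\cdot 4^b$ intervals $\{I_k:k\in F(\{b\})\}$ are placed with a common centre-to-centre spacing $\delta_b$. The interval lengths are set to $|I_k|=1/13^{(m+1)!(k+1)!}$ as forced by (i); because these are super-exponentially small, they contribute negligibly to the length budget, and essentially all of $|I|$ is available to pay for spacings.

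For condition (ii) I would choose $\delta_b\ge 1/13^{(4^b+1)!}$, since for any two indices $n<k$ in $F(\{b\})$ one has $(n+1)!\ge(4^b+1)!$, and separate the sub-intervals $\widehat I_b$ and $\widehat I_{b'}$ (with $b<b'$) by a gap exceeding $1/13^{(4^b+1)!}$; this handles the within-level and the cross-level cases of (ii) uniformly. The heart of the argument is (iii). Fix $b\in B$ and $(J_k)_{L\le k<4^b}$ with $|J_k|\le 1/13^{(k+1)!}$. Because the factorial in the exponent beats any geometric tail,
\[
\sum_{k=L}^{4^b-1}|J_k|\;\le\;\sum_{k=L}^{\infty}\frac{1}{13^{(k+1)!}}\;\le\;\frac{2}{13^{(L+1)!}},
\]
and in fact only $J_L$ can be comparable to $\delta_b$ in length; each $J_k$ with $k>L$ is smaller than $\delta_b$ by a factor of at least $13^{(L+1)!\cdot(L+1)}$, so it can contain at most one of the uniformly spaced level-$b$ intervals. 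Combining these observations,
\[
\#\{k\in F(\{b\}):I_k\subseteq\bigcup_j J_j\}\;\le\;\frac{|J_L|}{\delta_b}+1+(4^b-L-1),
\]
which is at most $4^b=\tfrac{1}{3}|F(\{b\})|$ as soon as $\delta_b$ is chosen of order $1/(L\cdot 13^{(L+1)!})$, and the defining inequality $|I|>13/13^{(L+1)!}$ is exactly what guarantees that this spacing fits inside $I$.

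The main obstacle I anticipate is the delicate simultaneous bookkeeping: the spacings $\delta_b$ and the allocations $\widehat I_b$ have to be chosen so that (ii), the packing estimate used for (iii), and the overall length budget $\sum_{b\in B}|\widehat I_b|\le|I|$ all hold at once. The factor $13$ in the hypothesis on $|I|$ and the matching factorial-based definition of $L$ are exactly what provides the slack making these three demands compatible, and I would expect the detailed construction to closely mirror the analogous Spacing Algorithm for Microscopic Sets of~\cite{add}.
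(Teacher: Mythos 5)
There is a genuine gap, and it is precisely the point you defer as ``delicate bookkeeping'': with your scheme the bookkeeping is not delicate, it is unsatisfiable. Condition (iii) must in particular withstand the single large interval $J_L$, whose length may be as big as $13^{-(L+1)!}$, while $|I|$ may be as small as (just above) $13\cdot 13^{-(L+1)!}$. If no window of length $13^{-(L+1)!}$ is allowed to cover more than one third of the intervals $I_k$, $k\in F(\{b\})$, then the family for \emph{each} $b\in B$ must be spread over an extent greater than $2\cdot 13^{-(L+1)!}$ (otherwise two such windows cover its convex hull and one of them covers at least half). Since you insist on placing the level-$b$ family inside its own sub-interval $\widehat I_b$, with the $\widehat I_b$ pairwise disjoint, you need $|I|>2|B|\cdot 13^{-(L+1)!}$, which already fails for $|B|\geq 7$ when $|I|$ is near its minimum --- and in the intended application (the proof of Theorem \ref{smz}) the sets $B$ are infinite, so the construction must handle infinite $B$. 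The same obstruction shows up in your own estimate: making $J_L$ harmless against a uniformly spaced level-$b$ block forces $\delta_b\gtrsim 13^{-(L+1)!}/4^b$, hence each block has extent $\gtrsim 3\cdot 13^{-(L+1)!}$ independently of $b$, and infinitely many disjoint blocks of that size cannot fit in $I$. (Your stronger choice $\delta_b\sim 1/(L\cdot 13^{(L+1)!})$ is even worse, since then a single block of size $3\cdot 4^b\delta_b$ already exceeds $|I|$ for large $b$; and for $L=0$ the inequality $|J_L|/\delta_b\leq L$ you need is vacuously false.)

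The paper avoids this by \emph{interleaving} all levels in one multi-scale construction rather than allotting disjoint regions to different $b$'s: it builds a nested tree of intervals $K^i_j$ inside $I$, where level $i$ has length $13^{-(i+1)!}$ and gaps at least $13^{-(i+1)!}$, each internal node has seven children of which six are internal and one is terminal, and the intervals $I_{a_i}$ ($a_0<a_1<\cdots$ enumerating $F(B)$) are placed in the terminal nodes taken in decreasing order of size. Condition (ii) then comes from the level-wise gaps, and condition (iii) comes from the branching: an interval $J_k$ with $|J_k|\leq 13^{-(k+1)!}$ meets essentially one level-$k$ node, below which lies at most a $6^{-(k-L+1)}$ fraction of the relevant terminal nodes, so the whole family $(J_k)_{L\leq k<4^b}$ meets at most a fraction $\sum_{k\geq L}6^{-(k-L+1)}<\tfrac13$ of the intervals $I_n$, $n\in F(\{b\})$. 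In other words, the protection against the big $J_L$ is shared by all levels $b$ simultaneously through the common tree, which is exactly what your disjoint-allocation, single-spacing-scale plan cannot reproduce.
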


\begin{proof}
Observe that without loss of generality we can assume that $m=0$ (having defined an interval $I'_k$ of length $\frac{1}{13^{(k+1)!}}$, it suffices to pick any interval $I_k$ of length $\frac{1}{13^{(m+1)!(k+1)!}}$ contained in it). 

Firstly, construct intervals $K_j^i$ for $i\geq L$ and $j<7\cdot 6^{i-L}$. Let $K_j^{L}$ for $j<7$ be such that:
\begin{itemize}
	\item each of them is of length $\frac{1}{13^{(L+1)!}}$;
	\item the distance between each two of them is at least $\frac{1}{13^{(L+1)!}}$;
	\item each of them is contained in $I$.
\end{itemize}
Suppose now that $K_j^i$ for $i<k$ and $j<7\cdot 6^{i-L}$ are defined. Let $K_j^k$ for $j<7\cdot 6^{k-L}$ be such that:
\begin{itemize}
	\item each of them is of length $\frac{1}{13^{(k+1)!}}$;
	\item the distance between each two of them is at least $\frac{1}{13^{(k+1)!}}$;
	\item $K_j^k$ is contained in $K_{r}^{k-1}$, where $r=j\mod 6^{k-L}$.
\end{itemize}
Put 
$$T=\left\{K_s^k:k\geq L\textrm{ and }6\cdot 6^{k-L}\leq s<7\cdot 6^{k-L}\right\}.$$
Note that for each $K_s^k$ belonging to $T$ there is no $K_j^{k+1}$ contained in it. Let $\{K_0,K_1,\ldots\}$ be an enumeration of $T$ with $|K_i|\geq |K_{i+1}|$.\\
Now we can proceed to the definition of $(I_k)_{k\in F(B)}$. Let $\{a_0,a_1,\ldots\}$ be an increasing enumeration of $F(B)$. For each $i$ let $I_{a_i}$ be any interval of length $\frac{1}{13^{(a_i+1)!}}$ contained in $K_i$ (note that $|K_i|\geq |I_{a_i}|$ for all $i$).\\
It is easy to see that the constructed intervals satisfy conditions (i) and (ii). We will show that they satisfy condition (iii) as well.\\
Pick $b\in B$ and consider any sequence of intervals $(J_k)_{L\leq k<4^b}$ with $|J_k|\leq\frac{1}{13^{(k+1)!}}$. Observe that $J_{L}$ can intersect at most $\frac{|F(\{b\})|}{6}$ of the intervals $(I_n)_{n\in F(\{b\})}$. Similarly, $J_{L}\cup J_{L+1}$ can intersect at most $\frac{|F(\{b\})|}{6}+\frac{|F(\{b\})|}{36}$ of the intervals $(I_n)_{n\in F(\{b\})}$. Generally, the sum of the sequence $(J_k)_{L\leq k<4^b}$ can intersect at most $$\frac{|F(\{b\})|}{6}+\ldots+\frac{|F(\{b\})|}{6^{4^b-L}}<\frac{|F(\{b\})|}{3}$$ of the intervals $(I_n)_{n\in F(\{b\})}$. This finishes the proof.
\end{proof}

Now we can proceed to the proof of Theorem \ref{smz}.

\begin{proof}
Set $\varepsilon=\frac{1}{13}$. For all $n\in\omega$ let $\varepsilon_n=\varepsilon^{(n+1)!}$ and $k_n$ be the least number such that $4^{k_n}>(n+1)!$. Denote $h(n)=4^{k_n}$. Firstly, we will construct auxiliary intervals $I^n_k$ for $n,k\in\omega$ with $|I^n_k|=\varepsilon_n^{(k+1)!}$. At the end we will put $X=\bigcap_{i} X_i$, where $X_i=\bigcup_j I^i_j$.

For all $n\in\omega$ and $k<h(n)$ pick closed intervals $I^n_k\subset [0,+\infty)$ with $|I^n_k|=\varepsilon_n^{(k+1)!}$ in such a way that the distance between each two of them is at least $\varepsilon$. The intervals $I^n_k$ for $n\in\omega$ and $k\geq h(n)$ will be defined inductively. In the $m$-th step of the induction we define $I^m_k$ for all $k\geq h(m)$.

At the first step let 
$$T_0=\left\{(n,k):n>0\textrm{ and }k<h(n)\right\}.$$
Pick any partition $(B(n,k))_{(n,k)\in T_0}$ of the set $\omega\setminus k_0$ into infinite sets, such that the length of $I^n_k$ (which is already defined) is at least $13\left(\frac{1}{13}\right)^{(\min F(B(n,k))+1)!}$. Now, for each $(n,k)\in T_0$ apply the Spacing Algorithm to $m=0$, $I^n_k$ and $B(n,k)$ to get closed intervals $I^0_j$ with $|I^0_j|=\varepsilon_0^{(j+1)!}$ for all $j\geq h(0)$.

At the second step let 
$$T_1=\left\{(0,k):I^0_k\cap\bigcup_{i<h(1)}I_i^1=\emptyset\right\}$$
and set any partition $(B(0,k))_{(0,k)\in T_1}$ of the set $\omega\setminus k_1$ into infinite sets, such that:
\begin{itemize}
	\item the length of $I^0_k$ (which is already defined) is at least $13\left(\frac{1}{13}\right)^{(\min F(B(0,k))+1)!}$ for all $(0,k)\in T_1$,
	\item $B(n,k)\cap B(0,k')$ is infinite for all $(n,k)\in T_0$ and $(0,k')\in T_1$.
\end{itemize}
For every $(0,k)\in T_1$ apply the Spacing Algorithm to $m=1$, $I^0_k$ and $B(0,k)$ to get closed intervals $I^1_j$ with $|I^1_j|=\varepsilon_1^{(j+1)!}$ for all $j\geq h(1)$.

Suppose now that $(I^i_j)_{j\in\omega}$ for all $i<m$ are already defined. Let 
$$T_m=\left\{(m-1,k):I^{m-1}_k\cap\bigcup_{i<h(m)}I_i^m=\emptyset\textrm{ and }k\geq h(m-1)\right\}\cup$$ $$\cup\left\{(m-2,k):I^{m-2}_k\cap\bigcup_{i<h(m-1)}I_i^{m-1}\neq\emptyset\right\}.$$
Pick any partition $(B(n,k))_{(n,k)\in T_m}$ of the set $\omega\setminus k_m$ into infinite sets, such that:
\begin{itemize}
	\item the length of $I^n_k$ (which is already defined) is at least $13\left(\frac{1}{13}\right)^{(\min F(B(n,k))+1)!}$ for all $(n,k)\in T_m$,
	\item $B(n,k)\cap B(n',k')$ is infinite for all $(n,k)\in \bigcup_{j<m} T_j$ and $(n',k')\in T_m$.
\end{itemize}
For each $(n,k)\in T_m$ apply the Spacing Algorithm to $m$, $I^n_k$ and $B(n,k)$ to get closed intervals $I^m_j$ with $|I^m_j|=\varepsilon_m^{(j+1)!}$ for all $j\geq h(m)$.

Define $X=\bigcap_{i\in\omega}X_i$, where $X_i=\bigcup_{j\in\omega}I^i_j$ for all $i$. Clearly, $X$ is picoscopic. Let $x=-1$. We will show that $X\cup\{x\}$ is not picoscopic. In order to do it, we must prove that for every $N\in\omega$ the set $X$ cannot be covered by a sequence of intervals $(J_k)$ with $J_N=\emptyset$ and $|J_k|\leq\varepsilon^{(k+1)!}$ for all $k$.

Let $N$ and $(J_k)$ be as above. We will construct sequences $(i_n)$ and $(j_n)$ such that $(I^{i_n}_{j_n})$ is strictly decreasing and each $I^{i_n}_{j_n}$ is disjoint with $\bigcup_{k\leq j_n}J_k$. Therefore, the intersection of all $I^{i_n}_{j_n}$'s will define a point from $X$ which is not covered by the sequence $(J_k)$.

Observe that at least one of $I^N_j$ for $j<h(N)$, say $I^N_K$, is disjoint with $\bigcup_{k\leq N}J_k$.

Consider the case that for all $r,s\in\omega$ with $I_s^r\subset I^N_K$ and $I_s^r\cap\bigcup_{k\leq s}J_k=\emptyset$ there are $j$ and $i$ such that $I_j^i\subsetneq I^r_s$ and $I_j^i\cap\bigcup_{k\leq j}J_k=\emptyset$ (notice that by the construction we have $j>K$). This condition allows us to define the desired sequences. Therefore, we can assume from now on that there are $R,S\in\omega$ with $I_S^R\subset I^N_K$ and $I_S^R\cap\bigcup_{k\leq S}J_k=\emptyset$ such that for all $j$ and $i$ with $I_j^i\subsetneq I^R_S$ we have $I_j^i\cap\bigcup_{k\leq j}J_k\neq\emptyset$.

We will need the following notation. For $i,j\in\omega$ find the unique $n\in\omega$ and $k<h(n)$ such that $I_j^i\subset I^n_k$ and define
$$g(i,j)=\left\{\begin{array}{ll}
\min(\omega\setminus\{n\})&\textrm{ , if }i=n\\
n+1&\textrm{ , if }i=n-1\\
i+1&\textrm{ , if }i\neq n\textrm{ and }i\neq n-1.\\
\end{array} \right.$$
Observe that $g(i,j)\neq n$. The number $g(i,j)$ should be viewed as follows: if $I^{g(i,j)}_l\subset I^i_j$, then there are no $r,s\in\omega$ with $I^{g(i,j)}_l\subsetneq I^r_s\subsetneq I^i_j$.

Denote $M=g(R,S)$ (notice that $M\neq N$) and consider the intervals $I^{M}_j$ for $j\in F(M(R,S))$. By condition (iii) of the Spacing Algorithm and the properties of $I^R_S$, we have:
$$(\star)\ \forall_{b\in B(R,S)}\textrm{ more than one half of }(J_k)_{k\in F(\{b\})}\textrm{ intersect }I^R_S.$$

Now we want to find some $I^M_L$ disjoint with $\bigcup_{k\leq L}J_k$. There are two possible cases. If $M>N$, then at least one of the intervals $I^M_j$, for $j\leq h(M)$, is disjoint with $\bigcup_{k\leq h(M)}J_k$, since $J_N=\emptyset$. If $M<N$, then at least one of the intervals $I^M_j$, for $j\leq N$, is disjoint with $\bigcup_{k\leq N}J_k$. Indeed, if each $J_k$ with $k\leq N$ intersects only one of those intervals, then we are done, since $J_N=\emptyset$. Otherwise, if some $J_k$ with $k\leq N$ intersects more than one of those intervals, then by condition (ii) of the Spacing Algorithm, it can intersect only the ones with $j>k$. Let $\bar{k}$ be the smallest $k$ with this property. Then, it is impossible to intersect each $I^M_j$ for $j\leq \bar{k}$ with $\bigcup_{k<\bar{k}}J_k$. Hence, there exists $I^M_L$ disjoint with $\bigcup_{k<\bar{k}}J_k$. Therefore, in both cases we are able to find $I^M_L$ disjoint with $\bigcup_{k\leq L}J_k$.

We are ready to construct sequences $(i_n)$ and $(j_n)$. Denote $i_0=g(M,L)$ and note that there is some $b_0\in B(M,L)\cap B(R,S)$. By $(\star)$ and condition (iii) of the Spacing Algorithm, there is $j_0\in F(\{b_0\})$ such that $I^{i_0}_{j_0}$ is disjoint with $\bigcup_{k\leq j_0}J_k$. Suppose now that $i_n$ and $j_n$ for $n\leq m$ are defined. Denote $i_{m+1}=g(i_m,j_m)$ and note that there is some $b_{m+1}\in B(i_m,j_m)\cap B(R,S)$. Hence, by $(\star)$ and condition (iii) of the Spacing Algorithm, there is $j_{m+1}\in F(\{b_{m+1}\})$ such that $I^{i_{m+1}}_{j_{m+1}}$ is disjoint with $\bigcup_{k\leq j_{m+1}}J_k$. This ends the construction and the entire proof.
\end{proof}

Of course our proof works for a wider class of sequences of functions satisfying some technical properties. In particular, it works in the following case.

\begin{corollary}
\label{wniosek}
Set $k\in\omega$ and define $f_n(\varepsilon)=\varepsilon^{(n+1)!}$ for $n<k$ and $f_n(\varepsilon)=\varepsilon^{(n+2)!}$ for $n\geq k$. Then there are an $(f_n)$-microscopic set $X$ and a point $x\in\mathbb{R}$ such that $X\cup\{x\}$ is not $(f_n)$-microscopic. In particular, there is a picoscopic set which is not $(f_n)$-microscopic.
\end{corollary}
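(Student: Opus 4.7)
The plan is to mimic the proof of Theorem \ref{smz} with the factorial $(n+1)!$ replaced throughout by the shifted exponent $e(n)=(n+1)!$ for $n<k$ and $e(n)=(n+2)!$ for $n\geq k$, so that $f_n(\varepsilon)=\varepsilon^{e(n)}$. The main step, and the place where most care is needed, is to formulate a variant of the Spacing Algorithm whose output intervals $I_j$ have length $1/13^{e(m)\,e(j)}$ in place of $1/13^{(m+1)!(j+1)!}$, with spacing property (ii) rewritten using $e$ and counting property (iii) asserting that any sequence $(J_\ell)_{L\leq \ell<4^b}$ with $|J_\ell|\leq 1/13^{e(\ell)}$ covers at most a third of the $I_j$'s for $j\in F(\{b\})$. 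Its proof carries through because the crucial bound $\sum_{\ell\geq L}|F(\{b\})|/6^{\ell-L}<|F(\{b\})|/3$ depends only on the branching factor $6$ in the $K_j^i$-construction, not on the specific exponents, and the shifted $e$ provides at least as much geometric slack for the nested packing, the gaps $e(\ell+1)-e(\ell)$ being no smaller than in the picoscopic case.

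Next I would build $X=\bigcap_i\bigcup_j I^i_j$ exactly as in Theorem \ref{smz}, now demanding $|I^i_j|=1/13^{e(i)\,e(j)}$ and inserting the modified Spacing Algorithm at every inductive step. The cover $(I^i_j)_j$ then witnesses $X\in\Mf$ at parameter $\varepsilon_i=1/13^{e(i)}\to 0$. To show $X\cup\{-1\}\notin\Mf$ I would repeat the terminal argument of Theorem \ref{smz} verbatim: if $(J_\ell)_\ell$ were a cover of $X\cup\{-1\}$ with $|J_\ell|\leq f_\ell(\varepsilon)$, then the unique $J_N\ni -1$ is disjoint from $X\subset[0,\infty)$, and the inductive construction of sequences $(i_n,j_n)$ produces a point of $X$ escaping every remaining $J_\ell$. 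That construction invoked only the spacing and counting properties of the Spacing Algorithm, so it transfers once the algorithm has been adapted.

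The \emph{In particular} clause follows by a short calculation. Since $f_n(\varepsilon)\leq\varepsilon^{(n+1)!}$ for every $n$, the set $X$ is picoscopic. Moreover $X\cup\{-1\}$ is itself picoscopic: given $\delta\in(0,1)$, choose $i$ so large that $e(i)\log 13\geq(k+1)\log(1/\delta)$, take $J_0$ to be the interval of length $\delta$ centered at $-1$, and set $J_{\ell+1}=I^i_\ell$ for $\ell\geq 0$. A direct check of the cases $\ell<k$ and $\ell\geq k$ gives $|I^i_\ell|\leq\delta^{(\ell+2)!}$, so the shifted cover witnesses picoscopicity of $X\cup\{-1\}$ at parameter $\delta$. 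Combined with $X\cup\{-1\}\notin\Mf$, this yields the desired picoscopic set outside $\Mf$.
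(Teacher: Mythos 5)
Your proposal is correct and follows essentially the same route as the paper, which proves this corollary simply by observing that the proof of Theorem~\ref{smz} (including the Spacing Algorithm) goes through with $(n+1)!$ replaced by the shifted exponents $e(n)$, exactly as you argue. Your explicit verification of the \emph{in particular} clause — checking directly that the cover $(I^i_\ell)_\ell$ of $X$, shifted by one index and preceded by a short interval around $-1$, witnesses that $X\cup\{-1\}$ is picoscopic — is a correct way to fill in a detail the paper leaves implicit (one could alternatively note that for \emph{any} $A\in\Mf$ and any point $x$, re-covering $A$ at parameter $\varepsilon^{k+1}$ and shifting indices shows $A\cup\{x\}$ is picoscopic).
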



\bibliographystyle{amsplain}
\bibliography{nanosets}

\end{document}